\documentclass[11pt]{amsart}
\usepackage[all]{xy}
\usepackage{amsmath}
\usepackage{amsfonts}
\usepackage{amssymb}
\usepackage{amscd}
\usepackage{amsthm}
\usepackage{latexsym}
\usepackage{amsbsy}
\usepackage{color,enumerate}

\def\0D{\Delta^{(0)}}
\def\1D{\Delta^{(1)}}

\newtheorem{theorem}{Theorem}[section]
\newtheorem{remark}[theorem]{Remark}
\newtheorem{proposition}[theorem]{Proposition}
\newtheorem{lemma}[theorem]{Lemma}

\newtheorem{example}[theorem]{Example}
\newtheorem{definition}[theorem]{Definition}

\def\build#1_#2^#3{\mathrel{\mathop{\kern 0pt#1}\limits_{#2}^{#3}}}

\def\odots{\ot\cdots\ot}

\parindent 0in

\numberwithin{equation}{section}

\def\a{\alpha}
\def\b{\beta}

\def\d{\delta}

\def\vp{\varphi}

\def\ot{\otimes}
\def\part{\partial}

\def\text{\hbox}

\def\ot{\otimes}

\def\Hom{\mathop{\rm Hom}\nolimits}

\def\Id{\mathop{\rm Id}\nolimits}

\def\build#1_#2^#3{\mathrel{
\mathop{\kern 0pt#1}\limits_{#2}^{#3}}}

\numberwithin{equation}{section}
\parindent 0in
\newcommand{\comment}[1]{\relax}

\textheight20.5cm
\oddsidemargin.5cm
\evensidemargin1cm
\textwidth15cm


\begin{document}
\title{Hom-Groups, Representations and Homological Algebra}
\author { Mohammad Hassanzadeh }
\curraddr{University of Windsor, Department of Mathematics and Statistics, Lambton Tower, Ontario, Canada.
}
\email{mhassan@uwindsor.ca}
\subjclass[2010]{ 17D99, 06B15, 20J05}
 \keywords{ Nonassociative rings and algebras, Representation theory,   Homological methods in group theory}
\maketitle
\begin{abstract}

A Hom-group $G$ is a nonassociative version of a group where associativity, invertibility, and unitality are twisted by a map $\alpha: G\longrightarrow G$.
Introducing the Hom-group algebra $\mathbb{K}G$, we observe that Hom-groups are providing examples of Hom-algebras, Hom-Lie algebras and Hom-Hopf algebras.
 We introduce two types of modules over  a Hom-group $G$. To find out more about these modules,
we  introduce Hom-group (co)homology with coefficients in these modules. Our (co)homology theories generalizes group (co)homologies for  groups.
Despite the associative case we observe that the  coefficients of  Hom-group homology is different from the ones for Hom-group cohomology.
 We show  that the inverse elements provide  a relation  between Hom-group (co)homology  with coefficients in right and left $G$-modules.
It will be shown that  our (co)homology theories for Hom-groups with coefficients could be reduced to the Hochschild (co)homologies of  Hom-group algebras.
For certain  coefficients the functoriality of  Hom-group (co)homology  will be shown.

\end{abstract}

\section{ Introduction}

The notion of Hom-Lie algebra is a generalization of  Lie algebras which  appeared
first in $q$-deformations of  Witt and Virasoro algebras  where  the Jacobi identity is deformed
by a linear map \cite{as}, \cite{ckl}, \cite{ cz}. There are several interesting examples of Hom-Lie algebras. For an example
the authors in \cite{gr} have shown that any  algebra of dimension 3  is a Hom-Lie algebra.
The related algebra structure is called Hom-algebra   and introduced in \cite{ms1}.
 Later the other objects such as  Hom-bialgebras and Hom-Hopf algebras were studied in \cite{ms2}, \cite{ms3}, \cite{ ya2}, \cite{ya3}, \cite{ya4}.
 We refer the reader to  more work for
 Hom-Lie algebras to \cite{cs}, \cite{hls}, \cite{ls}, \cite{bm}, for
  Hom-algebras to  \cite{gmmp}, \cite{fg}, \cite{hms},  and for  representations of Hom-objects to \cite{cq}, \cite{gw},  \cite{pss}.
One knows that studying Hopf algebras have close relations to groups and Lie algebras. The set of group-like elements and primitive elements  of a
 Hopf algebra form a group and a Lie algebra respectively. Conversely any group gives a Hopf algebra which  is called group algebra. For
  any Lie algebra we have universal  enveloping algebra. There have been many work relating Hom-Lie algebras and Hom-Hopf algebras.
  However some relations were missing in  the context of Hom-type objects due to the Lack of  Hom-type of notions for groups and group algebras.
  Here we briefly explain how Hom-groups were came in to the context of Hom-type objects.
The universal enveloping algebra of a Hom-Lie algebra has a Hom-bialgebra structure, see \cite{ya4}.
However it has not a Hom-Hopf algebra structure in the sense of \cite{ms2}. This is due to the fact that the antipode is not an inverse of the identity
map in the convolution product. This motivated  the authors in \cite{lmt} to modify the notion of invertibility in  Hom-algebras and introduce a new definition
for the antipode of Hom-Hopf algebras. Solving this problem, they came into axioms of Hom-groups which naturally are appearing in the structure of the group-like
elements of Hom-Hopf algebras. They also were motivated by  constructing  a Hom-Lie group integrating a Hom-Lie algebra.
Simultaneously with this paper,  the author    in  \cite{h1}  introduced and studied several fundamental notions for Hom-groups which the twisting map $\a$ is invertible.  It was shown that Hom-groups are examples of quasigroups. Furthermore Lagrange theorem for finite Hom-groups were shown. The Home-Hopf algebra structure of Hom-group Hopf algebra $\mathbb{K}G$ were introduced in \cite{h2}.\\

In this paper we investigate different aspects of Hom-groups such as  modules and homological algebra.
In Section 2, we study the basics of Hom-groups. We introduce the Hom-algebra associated to a Hom-group $G$ and we call it Hom-group algebra denoted by $\mathbb{K}G$.
It has been shown in \cite{ms1} that the commutator of a Hom-associative algebra  $A$ is a Hom-Lie algebra $\mathfrak{g}_A$.
The authors in \cite{ya4}, \cite{lmt} showed that the universal enveloping algebra of a Hom-Lie algebra is endowed it with a
Hom-Hopf algebra structure. Therefore Hom-groups are  sources of examples for  Hom-algebras, Hom-Lie algebras, and Hom-Hopf algebras as follows

$$ G\hookrightarrow \mathbb{K}G\hookrightarrow \mathfrak{g}_{\mathbb{K}G}\hookrightarrow U(\mathfrak{g}_{\mathbb{K}G}).$$
We refer the reader for more examples and fundamental notions for Hom-groups to \cite{h1}.
In Section 3, we introduce two types of modules over Hom-groups. The first type is called dual Hom-modules.
Using inverse elements in a Hom-group $G$, we show that a left dual $G$-module can be turned in to a right dual $G$-module and vice-a-versa.
Then we introduce $G$-modules and we show that  for any left $G$-module $M$ the algebraic dual $Hom(M, \mathbb{K})$ is a
dual right $G$-module where $\mathbb{K}$ is a field.
 It is known that the group (co)homology provides an important set of tools for studying modules over a group.
 This motivates us to introduce (co)homology theories for Hom-groups to find out more about representations of Hom-groups.
 Generally introducing homological algebra for non-associative objects is a difficult task. The first attempts to introduce homological tools for
 Hom-algebras and Hom-Lie algebras were appeared in \cite{aem}, \cite{ms3}, \cite{ms4}, \cite{ya1}. The authors in \cite{hss} defined Hochschild and
 cyclic (co)homology for Hom-algebras.
 In Section 4, we introduce  Hom-group cohomology with coefficients in dual left (right) $G$-modules. The  conditions of
  $M$  in our work were also appeared in other context such as \cite{cg} where the authors used the category of Hom-modules over
Hom-algebras to obtain a monoidal category for modules over Hom-bialgebras.
A noticeable difference between homology theories of Hom-algebras introduced in \cite{hss} and the ones for Hom-groups in this paper is
that the first one needs bimodules over Hom-algebras and the second one requires  one sided modules (left or right).
We  show that the Hom-group cohomology with coefficients in a dual right $G$-module is isomorphic to Hom-group cohomology with
coefficients in the dual left $G$-module
where  the left action is given by the inverse elements. We compute 0 and 1-cocycles and we show the functoriality of Hom-group cohomology for certain coefficients.
Since any Hom-group gives the Hom-group algebra, the natural question is the relation with cohomology theories of these two different objects.
We show that Hom-group cohomology of a Hom-group $G$ with coefficients in a dual left module is isomorphic to the Hom-Hochschild cohomology of Hom-group algebra $\mathbb{K}G$ with coefficients
in the dual $\mathbb{K}G$-bimodule whose right $\mathbb{K}G$-action is trivial. Later we
introduce Hom-group homologies with coefficients in left (right) $G$-modules.
 Despite the associative case, the Hom-associativity condition leads us to use different type of
representations for cohomology and homology theories for Hom-groups. We look into similar results in the homology case.
 The  (co)homology theories for Hom-algebras in \cite{hss}, \cite{aem}, \cite{ms3} and Hom-groups in this paper, gives us the hope of solving the open problems of introducing homological tools for other non-associative objects such as Jordan algebras and alternative algebras.

\bigskip




\tableofcontents

\section{Hom-groups }
Here we recall the definition of a Hom-group from \cite{lmt}.
\begin{definition}\label{def-hom}{\rm

A Hom group consists of a set G together with
a distinguished member $1$ of $G$,
a set map:  $\alpha: G\longrightarrow G$,
an operation $\mu: G\times G\longrightarrow G$, and an operation written as
$^{-1}:G\longrightarrow G$.
These pieces of structure are subject to the following axioms:\\

i) The product map $\mu: G\times G\longrightarrow G$ is satisfying the Hom-associativity property
   $$\mu(\alpha(g), \mu(h, k))= \mu(\mu(g,h), \alpha(k)).$$
   For simplicity when there is no confusion we omit the sign $\mu$.

   ii)  The map $\alpha $ is multiplicative, i.e, $\alpha(gk)=\alpha(g)\alpha(k)$.

   iii) The   element $1$ is called unit and it satisfies  the Hom-unitality condition
   $$g1=1g=\alpha(g), \quad\quad ~~~~~ \a(1)=1.$$

   iv) The map $g\longrightarrow g^{-1}$ satisfies the  anti-morphism  property $(gh)^{-1}=h^{-1} g^{-1}$.

   v) For  any $g\in G$
   there exists a natural number  $n$ satisfying the Hom-invertibility condition
   $$\alpha^n(g g^{-1})=\alpha^n(g^{-1}g)=1.$$
   The smallest such
    $n$ is called the invertibility index of $g$.
  }
\end{definition}

Since we have the anti-morphism $g\longmapsto g^{-1}$, therefore by the definition,  inverse of any element $g\in G$ is
unique although different elements may have different invertibility index.
The inverse of the unit element $1$ of a Hom-group $(G, \a)$ is itself because $\a(\mu(1,  1))=\a(1)=1$.
 For any Hom-group $(G, \a)$ we have $\a(g)^{-1}=\a(g^{-1})$. This is because if $g^{-1}$ is the unique inverse of $g$ where its  invertibility index is $k$ then
$$\a^{k-1}( \a(g) \a(g^{-1}))=\a^k(g) \a^k(g^{-1})=1.$$ So the invertibility index of $\a(g)$ is $k-1$.
If $k=1$ then the invertibility index  of each element  of $G$ is one.
Non-associativity of the product prevent us to easily define the notion of order for an element $g$. Therefore many  basics result of group theory will be affected by missing associativity condition.

\begin{example}\label{deformation of groups}
  {\rm
  Let $(G, \mu,  1)$ be any group and $\a: G\longrightarrow G$ be a group homomorphism. We define a new product $\mu_{\a}: G\times G\longrightarrow G$ given by $$\mu_{\a}(g, h)= \a(\mu(g, h))=\mu(\a(g), \a(h)).$$ Then $(G, \mu_{\a})$ is a Hom-group and we denote this by $G_{\a}$. We note that inverse of any element $g\in G_{\a}$ is also $g^{-1}$ because
  $$\a(\mu_{\a}(g, g^{-1}))= \a(g)\a(g^{-1})=\a(1)=1.$$
  The invertibility index of all elements of $G_{\a}$ are one.

  }
\end{example}
\begin{remark}
  {\rm
  In this paper we use the general definition of Hom-groups in Definition \ref{def-hom}. However the author in \cite{h1} considered an special case when $\a$ is invertible. Therefore the invertibility axiom will change to  the one that for any $g\in G$, there exists a $g^{-1}\in G$ where
  $$gg^{-1}= g^{-1}g=1.$$
  It was shown that the inverse element $g^{-1}$ is unique and also $(gh)^{-1}= h^{-1}g^{-1}$. Therefore some of the axioms in Definition \ref{def-hom} will be obtained by Hom-associativity. See \cite{H1}.

  }
\end{remark}

\begin{definition}{\rm
    Let $\mathbb{K}$ be a field. For any Hom-group $(G, \a)$ we can define a free $k$-Hom algebra $\mathbb{K}G$ which is called Hom-group algebra.
    More precisely $\mathbb{K}G$ denotes the set of all formal expressions of the form $\sum cg$ where $c\in k$ and $g\in G$. The multiplication of $\mathbb{K}G$ is defined
    by $(cg)(c'g')= (cc')(gg')$ for all $c,c'\in k$ and $g,g'\in G$.
    For the Hom-algebra structure  we  extend $\a:G\longrightarrow G$ to a $\mathbb{K}$-linear map $\mathbb{K}G\longrightarrow \mathbb{K}G$ in the obvious way.

    }
      \end{definition}

      \begin{remark}{\rm

It is shown in \cite{ms1} that the commutator of a Hom-associative algebra  $A$ given by  $[a,b]=ab-ba$,  is a Hom-Lie algebra $\mathfrak{g}_A$.
Furthermore the authors in \cite{ya4},  \cite{lmt} showed that the universal enveloping algebra of a Hom-Lie algebra is endowed it with a
Hom-Hopf algebra structure. One notes that  the Hopf algebra structures in \cite{ya4} is different from the one in \cite{lmt}. Therefore Hom-groups
are a  source of examples of Hom-algebras, Hom-Lie algebras, and Hom-Hopf algebras   as follows

$$ G\hookrightarrow \mathbb{K}G\hookrightarrow \mathfrak{g}_{\mathbb{K}G}\hookrightarrow U(\mathfrak{g}_{\mathbb{K}G}).$$
}
      \end{remark}

  By \cite{lmt}, an element $x$ in  an unital Hom-algebra $(A, \a, 1)$ is invertible if there is an element $x^{-1}\in A$ and a
   non-negative integer $k$ such that $$ \a^k(x x^{-1})= \a^k(x^{-1}x)=1.$$
  The element $x^{-1}$ is called the Hom-inverse of $x$.
  The Hom-inverse  of an element in a Hom-algebra may not be unique. This  is different from
  Hom-groups where the inverse of an element is unique. This prevents Hom-invertible elements in an Hom-algebra to be a Hom-group in general.
  The authors in \cite{lmt} showed that    for any unital Hom-algebra, the unit 1 is Hom-invertible, the
product of any two Hom-invertible elements is Hom-invertible and every inverse of a Hom-invertible element
is Hom-invertible. Furthermore they proved that the set of group-like elements in a Hom-Hopf algebra is  a Hom-group.
 The inverse of an element $cg$   in  the Hom-group algebra $\mathbb{K}G$   is the unique element $c^{-1}g^{-1}$, where $c\in \mathbb{K}$ and $g\in G$.
 \begin{definition}
   {\rm
   A subset $H$ of a Hom-group $(G,\a)$ is called a Hom-subgroup of $G$ if $(H,\a)$ itself is a Hom-group.

   }
 \end{definition}
 One notes that if $H$ is a Hom-subgroup of $G$ then $\a(h) = 1 h\in H$ for all $h\in H$. Therefore $\a(H)\subseteq H$.
 \begin{example}
   {\rm
   Let $G$ be a group and $\a: G\longrightarrow G$ be a group homomorphism. If $H$ is a subgroup of
   $G$ which $\a(H)\subseteq H$ then $(H_{\a}, \a)$ is a Hom-subgroup of $G_{\a}$.

   }
 \end{example}
\begin{definition}
  {\rm
  Let $(G, \a)$ and $(H, \b)$ be two Hom-groups. The morphism $f: G\longrightarrow H$ is called a morphism of Hom-groups if $\b(f(g))=f(\a(g))$ and $f(gk)=f(g)f(k)$ for all $g,k\in G$.
  Two Hom-groups $G$ and $H$ are called isomorphic if there exist a  bijective morphism of Hom-groups $f: G\longrightarrow H$.
  }
\end{definition}

\begin{proposition}
  Let $(G, \a)$ and $(H, \b)$ be two Hom-groups and  $f: G\longrightarrow H$ be  a morphism of Hom-groups.
  If  the invertibility index of the element $f(1_G)\in H$ is $n$ then $\b^{n+2}(f(1_G))=1_H$.

\end{proposition}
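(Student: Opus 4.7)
Set $e := f(1_G)$. The first move is to extract two elementary consequences of $f$ being a Hom-group morphism: from $1_G \cdot 1_G = \a(1_G) = 1_G$ and multiplicativity of $f$ one gets $e \cdot e = e$, while from $\b \circ f = f \circ \a$ together with $\a(1_G) = 1_G$ one gets $\b(e) = e$, and hence $\b^k(e) = e$ for every $k \geq 0$. In particular the desired identity $\b^{n+2}(e) = 1_H$ is equivalent to the sharper statement $e = 1_H$, which will be the actual target.

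Next, multiplicativity of $\b$ together with $\b^n(e) = e$ rewrites the invertibility-index hypothesis as
\[
e \cdot \b^n(e^{-1}) = \b^n(e^{-1}) \cdot e = 1_H,
\]
and applying $\b$ once more to the second equality (using $\b(e) = e$) yields
\[
\b^{n+1}(e^{-1}) \cdot e = 1_H. \qquad (\star)
\]
The core step is to compute $\b^{n+1}(e^{-1}) \cdot e$ a second way using Hom-associativity $\b(g) \cdot (h \cdot k) = (g \cdot h) \cdot \b(k)$ on the triple $(g,h,k) = (\b^n(e^{-1}), e, e)$. Substituting $e \cdot e = e$ on the left-hand side, $\b^n(e^{-1}) \cdot e = 1_H$ and $\b(e) = e$ on the right-hand side, together with Hom-unitality $1_H \cdot e = \b(e) = e$, collapses the identity to $\b^{n+1}(e^{-1}) \cdot e = e$. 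Comparing with $(\star)$ forces $e = 1_H$, whence $\b^{n+2}(e) = 1_H$.

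The only genuine obstacle is spotting the right triple on which to invoke Hom-associativity. Once $(\b^n(e^{-1}), e, e)$ is chosen, the idempotence $e^2 = e$ simplifies one side while the invertibility identity simplifies the other, producing two evaluations of $\b^{n+1}(e^{-1}) \cdot e$ whose forced equality pins down $e = 1_H$. The remaining steps---multiplicativity of $f$ and $\b$, Hom-unitality, and the stability $\b(e)=e$---are routine bookkeeping with the Hom-group axioms.
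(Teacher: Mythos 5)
Your proof is correct, but it takes a genuinely different route from the paper's and in fact establishes a strictly stronger conclusion. The paper starts from $e\cdot e=1_H\cdot e$ (where $e=f(1_G)$), applies $\b^n$, right-multiplies by $\b^{n+1}(e^{-1})$, and uses Hom-associativity together with the invertibility index to land directly on $\b^{n+2}(e)=1_H$; it records the identity $\b(e)=f(\a(1_G))=e$ along the way but never exploits it further. You, by contrast, put that identity at the center: since $\b$ fixes $e$, the claim $\b^{n+2}(e)=1_H$ is equivalent to $e=1_H$, and you prove the latter by comparing two evaluations of $\b^{n+1}(e^{-1})\cdot e$ --- one obtained by applying $\b$ to $\b^n(e^{-1})\cdot e=1_H$, the other from Hom-associativity on the triple $(\b^n(e^{-1}),e,e)$ combined with $e\cdot e=e$ and Hom-unitality. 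Every step checks out against the axioms of Definition \ref{def-hom}. What your argument buys is the sharper statement $f(1_G)=1_H$ for every morphism of Hom-groups; this also reveals that the remark immediately following the proposition (that in general the unitality condition $f(1)=1$ does not hold) is not supported by the proposition itself, since $\b(f(1_G))=f(1_G)$ makes the paper's own conclusion $\b^{n+2}(f(1_G))=1_H$ collapse to $f(1_G)=1_H$.
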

\begin{proof}
Since $f$ is multiplicative then $$f(1_G) f(1_G)= f(1_G 1_G) = f(1_G).$$
 Also $$1_H f(1_G)=\b(f(1_G))= f(\a(1_G))= f(1_G).$$
 Therefore $$f(1_G) f(1_G)= 1_H f(1_G).$$
 Then $\b^n(f(1_G)) \b^n(f(1_G))= \b^n(1) \b^n(f(1_G))$.
 So we have $\b^n(f(1_G)) \b^n(f(1_G))= 1_H \b^n(f(1_G))$. Then $$[\b^n(f(1_G)) \b^n(f(1_G))] \b^{n+1}(f(1_G)^{-1})= [1_H \b^n(f(1_G))] \b^{n+1}(f(1_G)^{-1}).$$
 So
 $$  \b^{n+1}(f(1_G)) [   \b^n(f(1_G) f(1_G)^{-1})] = \b(1_H) [\b^n(f(1_G) f(1_G)^{-1})]   .$$ Then

 $$\b^{n+1}(f(1_G)) 1_H = b(1_H) 1_H.    $$ Therefore $\b^{n+2}(f(1_G))= \b^2(1_H)=  1_H.$

\end{proof}
This Lemma shows that in general  for a  Hom-group homomorphism    $f: G\longrightarrow H$ the unitality condition  $f(1)=1$ does not hold.

\begin{lemma}
   Let $(G, \a)$ and $(H, \b)$ be two Hom-groups and  $f: G\longrightarrow H$ be  a morphism of Hom-groups. If $f(1_G)=1_H$ then
   $f(g^{-1})= f(g)^{-1}$.

\end{lemma}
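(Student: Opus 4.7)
The plan is to verify directly that $f(g^{-1})$ satisfies the defining property of the inverse of $f(g)$ in the Hom-group $H$, and then invoke uniqueness of inverses (noted in the paragraph following Definition~\ref{def-hom}).

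First I would start from the Hom-invertibility axiom in $G$: there exists a natural number $n$, the invertibility index of $g$, such that
\[
\alpha^n(g g^{-1}) = \alpha^n(g^{-1} g) = 1_G.
\]
Applying $f$ to these equalities and using the standing hypothesis $f(1_G)=1_H$ yields
\[
f(\alpha^n(g g^{-1})) = f(\alpha^n(g^{-1} g)) = 1_H.
\]
Next I would push $\alpha$'s across $f$ using the morphism condition $f \circ \alpha = \beta \circ f$, applied $n$ times, together with multiplicativity $f(gh) = f(g)f(h)$. This gives
\[
\beta^n\bigl(f(g) f(g^{-1})\bigr) = \beta^n\bigl(f(g^{-1}) f(g)\bigr) = 1_H.
\]
Thus $f(g^{-1})$ satisfies the Hom-invertibility condition witnessing an inverse of $f(g)$ in $H$ (with an invertibility index at most $n$).

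Finally I would appeal to uniqueness of inverses in a Hom-group (remarked after Definition~\ref{def-hom}): although different elements may have different invertibility indices, each element has at most one element playing the role of its inverse. Consequently $f(g^{-1}) = f(g)^{-1}$, completing the proof. There is no serious obstacle here; the only subtlety is making sure we use $f(1_G)=1_H$ (without which the previous proposition shows only that $\beta^{n+2}(f(1_G))=1_H$, and the above chain would not land on the honest unit of $H$).
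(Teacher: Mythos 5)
Your proposal is correct and follows essentially the same route as the paper: apply $f$ to the Hom-invertibility identity $\alpha^n(gg^{-1})=1_G$, use $f\circ\alpha=\beta\circ f$ and multiplicativity to get $\beta^n\bigl(f(g)f(g^{-1})\bigr)=f(1_G)=1_H$, and conclude by uniqueness of inverses. You are slightly more careful than the paper in checking both the left and right identities and in making the appeal to uniqueness explicit, but the argument is the same.
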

\begin{proof}
  We suppose that the  invertibility index of $g$ is $n$. Therefore $$ \b^n(f(g) f(g^{-1}))=f(\a^n(g g^{-1}))= f(1_G)=1_H.$$
  So  $f(g^{-1})= f(g)^{-1}$.
\end{proof}

\begin{lemma}
   Let $(G, \a)$ and $(H, \b)$ be two Hom-groups and  $f: G\longrightarrow H$ be  a morphism of Hom-groups. If $f(1_G)=1_H$ then
   $kerf=\{ g\in G, ~~~ f(g)=1_H\}$ is a Hom-subgroup of $G$.
\end{lemma}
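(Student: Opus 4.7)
The plan is to verify that $\ker f$ satisfies the five axioms of Definition \ref{def-hom} by inheriting them from $G$, once we check that $\ker f$ is closed under the product $\mu$, under the inverse map $^{-1}$, and under $\alpha$, and contains $1_G$.

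First I would observe that $1_G \in \ker f$ by hypothesis. Next, for closure under the product, let $g,k \in \ker f$. Using multiplicativity of $f$ together with the Hom-unitality identity $1_H \cdot 1_H = \beta(1_H) = 1_H$ (which follows from axiom (iii) applied to the unit of $H$), one gets $f(gk) = f(g)f(k) = 1_H \cdot 1_H = 1_H$, so $gk \in \ker f$. For closure under $\alpha$, if $g \in \ker f$ then $f(\alpha(g)) = \beta(f(g)) = \beta(1_H) = 1_H$, so $\alpha(g) \in \ker f$. For closure under inverses, apply the preceding lemma: if $g \in \ker f$, then $f(g^{-1}) = f(g)^{-1} = 1_H^{-1} = 1_H$, where $1_H^{-1} = 1_H$ follows from the remark after Definition \ref{def-hom} that the unit is its own inverse.

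Once these closure properties are in hand, the Hom-group axioms for $(\ker f, \alpha|_{\ker f}, \mu|_{\ker f \times \ker f}, {}^{-1}|_{\ker f}, 1_G)$ are immediate inheritances from $(G,\alpha,\mu,{}^{-1},1_G)$: Hom-associativity, multiplicativity of $\alpha$, the anti-morphism property of $^{-1}$, and the Hom-invertibility condition $\alpha^n(gg^{-1}) = \alpha^n(g^{-1}g) = 1_G$ hold for all elements of $G$ and hence \emph{a fortiori} for elements of $\ker f$; Hom-unitality holds with $1_G$ playing the role of the unit, since $1_G \in \ker f$.

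The only non-routine point is the closure under product, where one must recognize that the unit's own multiplication law forces $1_H \cdot 1_H = 1_H$ rather than some twisted expression. Beyond that, the argument is a direct unpacking of definitions and an application of the preceding lemma, so I do not anticipate any real obstacle.
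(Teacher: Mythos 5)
Your proof is correct and follows essentially the same route as the paper's: closure under the product via multiplicativity of $f$, and closure under inverses via the preceding lemma giving $f(g^{-1})=f(g)^{-1}=1_H^{-1}=1_H$. You are in fact somewhat more careful than the paper, which omits the explicit checks that $1_G\in\ker f$, that $\ker f$ is closed under $\alpha$, and that the remaining axioms are inherited; these additions are all sound.
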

\begin{proof}
  Since $f$ is multiplicative then $kerf$ is closed under multiplication. Also if $x\in kerf$ then $x^{-1}\in kerf$ because by previous lemma
  $$f(x^{-1})= f(x)^{-1}= 1_H^{-1}=1_H.$$
\end{proof}


\section{$G$-modules }

In this section we introduce two different types  of modules over a Hom-group $G$. The first type is called dual $G$-modules and we use them
later to introduce a cohomology theory for Hom-groups. The other type is called $G$-modules and they will be used to define a homology theory of Hom-groups.

\begin{definition}
  Let $(G, \a)$ be a Hom-group. An abelian  group $M$ is called a dual left $G$-module if  there are linear maps $\cdot: G\times M\longrightarrow M$, and  $\b: M\longrightarrow M$ where
  \begin{equation}\label{left-dual-module}
    g\cdot (\a(h)\cdot m)=\b((gh)\cdot m),     \quad\quad g,h\in G,
  \end{equation}
  and $$1\cdot m= \b(m).$$

Similarly, $M$ is called a dual right $G$-module if
\begin{equation}\label{right-dual-module}
  (m\cdot \a(h))\cdot g= \b(m\cdot (hg)), \quad\quad m\cdot 1=\b(m).
\end{equation}

Finally, we call $M$ a dual $G$-bimodule if it is both a dual left and a dual right $G$-module with the following bimodule property
$$\a(a)\cdot (v\cdot b)=(a\cdot v)\cdot \a(b).$$
\end{definition}

\begin{lemma}\label{result of left dual module}
  If $ (G, \a)$ is a Hom-group  and  $M$ a  dual left $G$-module, then
  \begin{equation}
    g\cdot \b(m)= \b(\a(g)\cdot m), \quad\quad g\in G, m\in M.
  \end{equation}
  Similarly for a dual right $G$-module we have
  \begin{equation}
    \b(m\cdot \a(g))= \b(m)\cdot g.
  \end{equation}
\end{lemma}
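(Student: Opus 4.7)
The plan is to derive both identities directly by specializing the defining axioms of a dual left (resp.\ right) $G$-module at $h = 1$, and then invoking the Hom-unitality of $G$ together with the fact that $\alpha(1) = 1$.

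More precisely, for the dual left $G$-module identity, I would start from the axiom
$$g \cdot (\alpha(h) \cdot m) = \beta((gh) \cdot m)$$
and substitute $h = 1$. On the left side, since $\alpha(1) = 1$ by Hom-unitality in $G$, the inner term becomes $1 \cdot m$, which equals $\beta(m)$ by the unit axiom of the module. On the right side, the product $g \cdot 1$ in the Hom-group equals $\alpha(g)$ by Hom-unitality. Combining these substitutions yields exactly
$$g \cdot \beta(m) = \beta(\alpha(g) \cdot m).$$

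For the dual right $G$-module case, the argument is entirely parallel: starting from
$$(m \cdot \alpha(h)) \cdot g = \beta(m \cdot (hg)),$$
I would set $h = 1$, use $\alpha(1) = 1$ and $m \cdot 1 = \beta(m)$ on the left, and use $1 \cdot g = \alpha(g)$ inside the right to obtain $\beta(m) \cdot g = \beta(m \cdot \alpha(g))$.

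There is essentially no obstacle here — the statement is a direct consequence of specializing one variable to the unit, and the only thing to be careful about is remembering that in a Hom-group the unit acts by $\alpha$ rather than as a strict identity, so $g \cdot 1 = \alpha(g)$ rather than $g$. This is what forces the appearance of $\alpha(g)$ on the right-hand side of the resulting identity.
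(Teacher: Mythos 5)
Your proof is correct and is essentially identical to the paper's: both substitute $h=1$ into the defining relations \eqref{left-dual-module} and \eqref{right-dual-module} and then use $1\cdot m=\b(m)=m\cdot 1$ together with the Hom-unitality $g1=1g=\a(g)$ and $\a(1)=1$. Nothing further is needed.
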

\begin{proof}
  This is followed by substituting $h=1$ in \eqref{left-dual-module} and  \eqref{right-dual-module} and  using $1\cdot m=\b(m)= m\cdot 1$.
  \end{proof}
  It is known that for every group $G$, a  right $G$-module $M$ can be turned in to a left $G$-module $\widetilde{M}=M$
  where the left action is given by $g\cdot m:= mg^{-1}$. This process can also  be done for Hom-groups as follows.

  \begin{lemma}\label{right to left}
    Let $(G, \a)$ be a Hom-group. A  dual right $G$-module $M$ can be turned in to a dual left $G$-module $\widetilde{M}=M$ by the left action given by
    \begin{equation}
    g\cdot m:= m\cdot g^{-1}.
    \end{equation}
  \end{lemma}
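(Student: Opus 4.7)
The plan is to verify the two defining axioms of a dual left $G$-module for $\widetilde{M} = M$ equipped with the map $\beta$ (same as for the right action) and the new action $g \cdot m := m \cdot g^{-1}$. Two ingredients from the preceding material will do all the work: the anti-morphism property $(gh)^{-1} = h^{-1}g^{-1}$ from Definition~\ref{def-hom}(iv), and the identity $\alpha(g)^{-1} = \alpha(g^{-1})$ established just after Definition~\ref{def-hom}. I will also use $1^{-1} = 1$, also noted there.

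First I would check Hom-unitality for $\widetilde{M}$. By the definition of the new action, $1 \cdot m = m \cdot 1^{-1} = m \cdot 1$, and the dual right $G$-module axiom \eqref{right-dual-module} gives $m \cdot 1 = \beta(m)$. So $1 \cdot m = \beta(m)$ as required.

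Next I would verify the Hom-associativity \eqref{left-dual-module}. Unpacking both sides in terms of the right action,
\begin{equation*}
g \cdot (\alpha(h) \cdot m) = (m \cdot \alpha(h)^{-1}) \cdot g^{-1} = (m \cdot \alpha(h^{-1})) \cdot g^{-1},
\end{equation*}
where the last step uses $\alpha(h)^{-1} = \alpha(h^{-1})$. Now applying the dual right $G$-module identity \eqref{right-dual-module} with $h$ replaced by $h^{-1}$ and $g$ replaced by $g^{-1}$, we obtain
\begin{equation*}
(m \cdot \alpha(h^{-1})) \cdot g^{-1} = \beta(m \cdot (h^{-1} g^{-1})) = \beta(m \cdot (gh)^{-1}),
\end{equation*}
by the anti-morphism property. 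On the other hand, unpacking the right-hand side of \eqref{left-dual-module} directly gives $\beta((gh) \cdot m) = \beta(m \cdot (gh)^{-1})$. The two expressions agree, so \eqref{left-dual-module} holds.

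There is no real obstacle here; the argument is bookkeeping with inverses. The only point requiring care is invoking $\alpha(h)^{-1} = \alpha(h^{-1})$ at the right moment, since without it one cannot bring the action of $\alpha(h)^{-1}$ into the form demanded by \eqref{right-dual-module}. Once that substitution is made, the anti-morphism property converts $h^{-1}g^{-1}$ into $(gh)^{-1}$ and the verification closes up immediately.
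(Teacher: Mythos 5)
Your proof is correct and follows essentially the same route as the paper's: both verify Hom-unitality via $1^{-1}=1$ and verify \eqref{left-dual-module} by the chain $g\cdot(\a(h)\cdot m)=(m\cdot\a(h)^{-1})\cdot g^{-1}=(m\cdot\a(h^{-1}))\cdot g^{-1}=\b(m\cdot(h^{-1}g^{-1}))=\b(m\cdot(gh)^{-1})=\b((gh)\cdot m)$, using the identity $\a(h)^{-1}=\a(h^{-1})$ and the anti-morphism property exactly as the paper does. No gaps.
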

  \begin{proof}

   This is followed by
   \begin{align*}
     g\cdot (\a(k)\cdot m)&= g\cdot (m\cdot \a(k)^{-1})\\
     &=  (m\cdot \a(k)^{-1})\cdot g^{-1}= (m\cdot \a(k^{-1}))\cdot g^{-1}\\
     &= \b(m\cdot (k^{-1}g^{-1}))=\b(m\cdot (gk)^{-1}) =\b((gk)\cdot m),\\
   \end{align*}
and also
\begin{equation*}
  1\cdot m=m\cdot 1^{-1}=m\cdot 1=\b(m).
\end{equation*}
  \end{proof}

  The following notion of modules over Hom-groups  will be used to introduce Hom-group homology.

  \begin{definition}\label{modules over Hom-groups}
Let $(G, \a)$ be a Hom-group. An abelian group  $V$ equipped with $\cdot :M \times V \longrightarrow V$, $a\times  v\mapsto a\cdot  v$, and $\b:V\longrightarrow V$, is called a left $G$-module if

\begin{equation}\label{aux-Hom-module}
(gk)\cdot \b(m) = \a(g)\cdot (k\cdot m), \quad\quad~~~~~~~~ 1\cdot m=\b(m),
\end{equation}
for all $g,k\in G$ and  $m\in M$.

Similarly, $(M,\b)$ is called a right $G$-module if
\begin{equation*}
\beta(m)\cdot(gk)= (m\cdot g)\cdot \a(k), \quad\quad ~~~~ m\cdot 1= \b(m).
\end{equation*}
Furthermore  $M$ is called an $G$-bimodule if
\begin{equation}\label{aux-A-bimodule}
\a(g)\cdot (m \cdot k) = (g \cdot m)\cdot \a(k),
\end{equation}
for all $g,k\in G$, and  $m \in M$.
\end{definition}

\begin{example}\rm{
  For a Hom-group $G$, the Hom-group algebra $\mathbb{K}G$ is  a bimodule  over $G$ by the left and right actions defined by its multiplication and $\b=\a$.
  More precise the left action is defined to be $g\cdot (ch)= c(gh)$ where $g,h\in G$ and $c\in k$.
  }
\end{example}

 \begin{lemma}\label{right to left-2}
    Let $(G, \a)$ be a Hom-group. A  right $G$-module $M$ can be turned in to a  left $G$-module $\widetilde{M}=M$ by the left action
    \begin{equation}
    g\cdot m:= m\cdot g^{-1}.
    \end{equation}
  \end{lemma}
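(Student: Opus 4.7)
The plan is to mimic the proof of Lemma \ref{right to left} (the dual version), only now the twist in the defining identity is on $\b(m)$ rather than on the group element. The basic ingredients are the same: the anti-morphism property $(gk)^{-1} = k^{-1}g^{-1}$ for inverses in a Hom-group, the fact (noted in the discussion after Definition \ref{def-hom}) that $\a(g^{-1}) = \a(g)^{-1}$, and the unit identity $1^{-1} = 1$. The right $G$-module axiom $\b(m)\cdot (gk) = (m\cdot g)\cdot \a(k)$ will do the rest.

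Concretely, I would first unfold the proposed left action on the two sides of the left-module axiom $(gk)\cdot \b(m) = \a(g)\cdot(k\cdot m)$. On the left, $(gk)\cdot \b(m) = \b(m) \cdot (gk)^{-1} = \b(m)\cdot(k^{-1} g^{-1})$, and applying the right $G$-module axiom with the pair $(k^{-1}, g^{-1})$ rewrites this as $(m\cdot k^{-1})\cdot \a(g^{-1}) = (m\cdot k^{-1})\cdot \a(g)^{-1}$. On the right, $\a(g)\cdot(k\cdot m) = \a(g)\cdot(m\cdot k^{-1}) = (m\cdot k^{-1})\cdot \a(g)^{-1}$ directly from the definition of the new left action. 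The two expressions agree, which verifies the main axiom in \eqref{aux-Hom-module}.

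For the unitality part, the computation $1\cdot m = m\cdot 1^{-1} = m\cdot 1 = \b(m)$ is immediate from the Hom-unitality of $G$ and the unit axiom of the right $G$-module.

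There is essentially no obstacle here; the only potential pitfall is keeping track of which of the two axioms in Definition \ref{modules over Hom-groups} one is verifying (the Hom-associativity of the action rather than the bimodule compatibility), and making sure that $\a$ commutes with inversion in the right spot so that the twist $\b(m)$ on the left matches $\a(g)^{-1}$ on the right. Once that bookkeeping is done, the verification is a single-line calculation on each axiom.
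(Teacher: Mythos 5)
Your proposal is correct and follows essentially the same route as the paper's proof: rewrite $(gk)\cdot\b(m)$ as $\b(m)\cdot(k^{-1}g^{-1})$ via the anti-morphism property, apply the right-module axiom to the pair $(k^{-1},g^{-1})$, use $\a(g^{-1})=\a(g)^{-1}$, and recognize the result as $\a(g)\cdot(k\cdot m)$ under the new action. The only difference is cosmetic (you verify the two sides meet in the middle rather than writing one chain of equalities), and your explicit check of the unit axiom is a welcome addition the paper omits in this lemma.
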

\begin{proof} This is because
  \begin{align*}
    &(gk)\cdot\b(m)=\b(m) \cdot (k^{-1}g^{-1})= (m\cdot k^{-1})\cdot \a(g^{-1})\\
    &=  (m\cdot k^{-1})\cdot \a(g)^{-1}= \a(g)\cdot (m\cdot k^{-1})       =\a(g)\cdot (k\cdot m)
  \end{align*}
\end{proof}
\begin{example}\rm{
  Let $G$ be a Hom-group and $M$ be a right $G$-module.  If $\mathbb{K}$ is a field, then the  algebraic dual
  ${{M}}^*= \Hom(M,\mathbb{K})$ can be turned in to a left dual $G$-module by the left dual action given by
\begin{equation}
  (g\cdot f)(m)= f(m\cdot g).
\end{equation}

}
\end{example}

\section{Hom-group cohomoloy}

In this section we introduce Hom-group cohomology for Hom-groups. To to this we need to use the dual modules for  the proper coefficients.

\begin{theorem}\label{Hom-group cohomology for left}
  Let $(G,   \alpha)$ be a Hom- group and $(M, \beta)$ be a dual left $G$-module. Let $C^n_{Hom}(G, M)$ be the space of all  maps $\varphi: G^{\times n}\longrightarrow M$. Then
\begin{equation*}
C_{Hom}^\ast(G, M)=\bigoplus_{n\geq0} C_{Hom}^n(G, M),
\end{equation*}
with the coface maps
\begin{align}\label{aux-cosimplisial-structure-vp}
\begin{split}
&\d_0\varphi(g_1, \cdots ,  g_{n+1})=g_1\cdot \varphi(\a(g_2), \cdots , \a(g_{n+1}))\\
&\d_i\varphi(g_1 , \cdots ,g_{n+1})=\b(\varphi(\a(g_1), \cdots , g_i g_{i+1}, \cdots , \a(g_{n+1}))), ~~ 1\leq i \leq n\\
&\d_{n+1}\varphi(g_1, \cdots , g_{n+1})= \b(\varphi(\a(g_1), \cdots , \a(g_{n}))),\\
\end{split}
\end{align}
is a cosimplicial module.
\end{theorem}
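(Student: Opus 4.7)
The plan is to verify the cosimplicial face identities
$$\delta_j \delta_i = \delta_i \delta_{j-1}, \quad 0 \le i < j \le n+2,$$
on an arbitrary $\varphi \in C^n_{Hom}(G,M)$, since the cosimplicial structure (with codegeneracies absent from the statement) is determined by these face relations. I would partition the index pairs $(i,j)$ into cases according to which of three qualitatively different ranges each index lies in: $\delta_0$, which applies the left $G$-action; the interior operators $\delta_1,\ldots,\delta_n$, which merge an adjacent pair $g_k g_{k+1}$ and apply $\beta$; and the terminal $\delta_{n+1}$, which drops the last argument and applies $\beta$.

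The interior cases split in two. For non-adjacent pairs $1 \le i$ with $j > i+1$, both compositions produce $\beta^2(\varphi(\cdots))$ on the same tuple, with the two merges sitting in disjoint positions; matching the two sides requires only multiplicativity of $\alpha$, so that $\alpha(g_k g_{k+1}) = \alpha(g_k)\alpha(g_{k+1})$ when an outer coface applies $\alpha$ to an already-merged position. For adjacent pairs $j = i+1$ with $i \ge 1$, both sides again produce $\beta^2(\varphi(\cdots))$, but the merged slot contains $(g_i g_{i+1})\alpha(g_{i+2})$ on one side and $\alpha(g_i)(g_{i+1} g_{i+2})$ on the other; equality is precisely the Hom-associativity axiom of Definition \ref{def-hom}. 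This is the structural heart of the verification.

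The cases $i = 0$ bring in the dual-module hypotheses. For $j=1$, the identity unwinds to $g_1 \cdot (\alpha(g_2) \cdot m) = \beta((g_1 g_2) \cdot m)$, which is exactly axiom \eqref{left-dual-module}. For $i=0$ and $j \ge 2$, the two sides differ only in whether a single $\beta$ sits outside or inside the single remaining left action, so the requirement reduces to $g_1 \cdot \beta(m) = \beta(\alpha(g_1) \cdot m)$, which is Lemma \ref{result of left dual module}. The extreme case $\delta_{n+2}\delta_0 = \delta_0 \delta_{n+1}$ is of this same latter form, and the remaining cases involving $\delta_{n+1}$ or $\delta_{n+2}$ require only multiplicativity of $\alpha$.

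The main obstacle I anticipate is purely combinatorial: after a double composition one must track, at each argument position, how many $\alpha$'s have accumulated and whether the position carries a merged product, so that the two applications of $\beta$ end up in the correct nesting relative to the left action before the dual-module axioms can be invoked. Once the cases are cleanly enumerated as above, the single nontrivial algebraic input per case (Hom-associativity, axiom \eqref{left-dual-module}, or Lemma \ref{result of left dual module}) completes each short verification.
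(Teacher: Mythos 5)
Your proposal is correct and follows essentially the same route as the paper: both reduce the claim to the coface identities and verify them case by case, using the dual left module axiom for the pair $(\delta_0,\delta_1)$, the relation $g\cdot\beta(m)=\beta(\alpha(g)\cdot m)$ of Lemma \ref{result of left dual module} for the pairs involving $\delta_0$ with a higher coface, Hom-associativity for adjacent interior cofaces, and multiplicativity of $\alpha$ for the remaining cases. The only difference is that you enumerate all cases systematically while the paper checks representative ones, so nothing further is needed.
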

\begin{proof}
We need to show that $\delta_i \delta_j= \delta_j \delta_{i-1}$ for $0\leq j< i \leq n-1$. Let us first show that $\d_1\d_0=\d_0\d_0$.
\begin{align*}
\d_0(\d_0\varphi)(g_1,\cdots , g_{n+2}) &=g_1\cdot \d_0\varphi(\a(g_2),\cdots ,\a(g_{n+2}))\\
&= g_1\cdot (\a(g_2)\cdot\varphi(\a^2(g_3),\cdots ,\a^2(g_{n+2}))) \\
& =\b((g_1g_2)\cdot\vp(\a^2(g_3),\cdots ,\a^2(g_{n+2}))) \\
&= \b(\d_0\vp(g_1g_2\, \a(g_3),\cdots ,\a(g_{n+2})))\\
&=\d_1\d_0\vp(g_1,\cdots ,g_{n+2}).
\end{align*}
We used the  left dual module property   in the third equality. Now we  show that $\delta_{n+1} \delta_n= \delta_n \delta_{n}$.
\begin{align*}
  \delta_{n+1} \delta_n\varphi(g_1,\cdots, g_{n+1})  &=\b(\delta_n\varphi(\a(g_1), \cdots, \a(g_{n})))\\
  &=\b^2(\varphi(\a(g_1), \cdots, \a(g_{n-1})))\\
  &=\b(\delta_n\varphi(\a(g_1),\cdots,\a( g_{n-1}), \a(g_{n}g_{n+1})))\\
  &=\b(\delta_n\varphi(\a(g_1),\cdots,\a( g_{n-1}), \a(g_{n})\a(g_{n+1})))\\
  &=\delta_n \delta_{n}\varphi(g_1,\cdots, g_{n+1}).
\end{align*}
We used the multiplicity of $\a$ in the fourth equality.
The following demonstrates that $\d_{n+1}\d_0=\d_0\d_{n}$. We have

\begin{align*}
  \d_{n+1}\d_0\varphi(g_1,\cdots , g_{n+1})  &=\b(\d_0\varphi(\a(g_1),\cdots ,\a(g_{n})))\\
  &=\b(\a(g_1)\cdot \varphi(\a^2(g_2),\cdots ,\a^2(g_{n})))\\
  &=g_1 \cdot\b(\varphi(\a^2(g_2),\cdots ,\a^2(g_{n})) )\\
  &=g_1\cdot \d_n\varphi(\a(g_2),\cdots, \a(g_{n+1}))\\
  &=\d_0\d_n\varphi(g_1,\cdots ,g_{n+1}).
\end{align*}

We used the Lemma \ref{result of left dual module} in the third equality.
The relations $ \delta_{j+1} \delta_j= \delta_j \delta_{j}$ follows from the Hom-associativity of $G$.
\end{proof}
Similarly we have the following result.

\begin{proposition}
  Let $(G,   \alpha)$ be a Hom-group and $(M, \beta)$ be a dual right $G$-module. Let $C^n_{Hom}(G, M)$ be the space of all  maps $\varphi: G^{\times n}\longrightarrow M$. Then
\begin{equation*}
C_{Hom}^\ast(G, M)=\bigoplus_{n\geq0} C_{Hom}^n(G, M),
\end{equation*}
with the coface maps
\begin{align}\label{aux-cosimplisial-structure-vp}
\begin{split}
&\d_0\varphi(g_1, \cdots ,  g_{n+1})= \varphi(\a(g_1), \cdots , \a(g_{n}))\cdot g_{n+1}\\
&\d_i\varphi(g_1 , \cdots ,g_{n+1})=\b(\varphi(\a(g_1), \cdots , g_ig_{i+1}, \cdots , \a(g_{n+1}))), ~~ 1\leq i \leq n\\
&\d_{n+1}\varphi(g_1, \cdots , g_{n+1})= \b(\varphi(\a(g_2), \cdots , \a(g_{n+1}))),\\
\end{split}
\end{align}
is a cosimplicial module.
\end{proposition}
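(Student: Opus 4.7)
The plan is to mirror the proof of Theorem \ref{Hom-group cohomology for left} step by step, with the left action replaced by the right action and the roles of the first and last arguments interchanged. Concretely, I would verify the cosimplicial identities $\delta_i\delta_j=\delta_j\delta_{i-1}$ for $0\le j<i$ by splitting into three characteristic cases, just as the author does there: (a) the two-fold composition involving the ``action face'' $\delta_0$; (b) the two-fold composition involving the ``drop face'' $\delta_{n+1}$; and (c) the mixed case joining $\delta_0$ with $\delta_{n+1}$. All other cases reduce to Hom-associativity of the product on $G$ together with multiplicativity of $\alpha$, exactly as in the earlier proof.

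For case (a), I would compute $\delta_0\delta_0\varphi(g_1,\dots,g_{n+2})$ to obtain an expression of the form $\bigl(\varphi(\alpha^2(g_1),\dots,\alpha^2(g_n))\cdot\alpha(g_{n+1})\bigr)\cdot g_{n+2}$, and match it against a composition of $\delta_0$ with an appropriate merge face; the right dual module axiom \eqref{right-dual-module}, namely $(m\cdot\alpha(h))\cdot g=\beta(m\cdot(hg))$, then identifies the two. For case (c), the right-module analogue of Lemma \ref{result of left dual module}, $\beta(m\cdot\alpha(g))=\beta(m)\cdot g$, lets the extra $\beta$ coming from $\delta_{n+1}$ pass through the right action produced by $\delta_0$, which closes that identity. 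Case (b) is purely combinatorial: after unfolding the definitions, the multiplicativity of $\alpha$ collapses the two nested $\alpha$'s and $\beta$'s. The remaining middle-face identities are the same as in the left version and rely only on Hom-associativity $\mu(\alpha(g),\mu(h,k))=\mu(\mu(g,h),\alpha(k))$ to match the merged triples.

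The main obstacle I anticipate is purely bookkeeping. In this presentation the two ``outer'' faces act on \emph{opposite} ends of the tuple: $\delta_0$ uses $g_{n+1}$ on the right, whereas $\delta_{n+1}$ strips off $g_1$ on the left. As a result, one has to be careful to identify which cosimplicial identity each computation is actually verifying (the indices shift compared with the left case) and to substitute the arguments in the correct order when replacing expressions from the proof of Theorem \ref{Hom-group cohomology for left}. Once the indexing is done correctly, the three key identities reduce verbatim to the right dual module axiom, the right-sided version of Lemma \ref{result of left dual module}, and multiplicativity of $\alpha$, respectively; the remainder is Hom-associativity.
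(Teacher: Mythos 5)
Your proposal is correct and follows essentially the same route as the paper: the paper's own proof writes out only the mixed identity $\d_{n+1}\d_0=\d_0\d_n$ using the right-sided version of Lemma \ref{result of left dual module} (your case (c)) and defers the remaining relations to the proof of Theorem \ref{Hom-group cohomology for left}, which is exactly the mirroring you describe, with the same three ingredients (right dual module axiom, the right-sided lemma, multiplicativity of $\a$) plus Hom-associativity for the middle faces. Your warning about the shifted indexing, since $\d_0$ acts on the last argument while $\d_{n+1}$ drops the first, is apt and is a point the paper itself glosses over.
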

\begin{proof}
Here we show that  $\d_{n+1}\d_0=\d_0\d_{n}$.
\begin{align*}
  \d_{n+1}\d_0\varphi(g_1,\cdots , g_{n+1})  &=\b(\d_0\varphi(\a(g_2),\cdots , \a(g_{n+1})))\\
  &=\b(\varphi(\a^2(g_2),\cdots ,\a^2(g_{n}))\cdot\a(g_{n+1}))\\
  &= \b(\varphi(\a^2(g_2),\cdots, \a^2(g_{n})) )\cdot g_{n+1}\\
  &= \d_n\varphi(\a(g_1),\cdots ,\a(g_{n}))\cdot  g_{n+1}\\
  &=\d_0\d_n\varphi(g_1,\cdots , g_{n+1}).
\end{align*}
We used the Lemma \ref{result of left dual module} in the third equality.
  The rest of the relations can be proved similar to the Theorem \ref{Hom-group cohomology for left}.

\end{proof}
Now we define the coboundary $b= \sum_{i=0}^{n} d_i$. The previous Theorem and Proposition imply $b^2=0$.
The  cohomology of the cochain complex
$$
\begin{CD}
0 @>b>> C_{Hom}^0(G, M) @>b>> C_{Hom}^1(G, M) @>b>> C_{Hom}^2(G, M) @>b>> C_{Hom}^3(G, M) \ldots
\end{CD}\\
$$
is called Hom-group cohomology of $G$ with coefficients in $M$. Here $M= C_{Hom}^0(G, M)$.
The following proposition shows the relation between Hom-group cohomology with coefficients with  dual left and  dual right modules.

\begin{proposition}
  Let $(G, \a)$  be a Hom-group and $M$ be a dual right $G$-module. Then  $\widetilde{M} =M$  with the left action $g\cdot m= m\cdot g^{-1}$ is a dual left $G$-module. Furthermore
  $$H^*(G, M)\cong H^*(G, \widetilde{M}).$$
  \end{proposition}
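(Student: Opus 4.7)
The first assertion is a direct application of Lemma~\ref{right to left}, whose statement and proof are formally identical in the ``dual'' setting since they only use the anti-morphism property $(gk)^{-1}=k^{-1}g^{-1}$, the identity $\a(g)^{-1}=\a(g^{-1})$, and the dual right-module axioms. So the plan is to take this part as given and concentrate on the comparison of cohomologies.

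To produce the isomorphism $H^*(G,M)\cong H^*(G,\widetilde{M})$, I would define a chain-level map $\Phi_n:C^n_{\mathrm{Hom}}(G,M)\longrightarrow C^n_{\mathrm{Hom}}(G,\widetilde{M})$ by reversal-and-inversion:
\begin{equation*}
\Phi_n(\varphi)(g_1,\dots,g_n)\;=\;\varphi(g_n^{-1},\dots,g_1^{-1}).
\end{equation*}
Since the inverse in a Hom-group is unique and satisfies $(g^{-1})^{-1}=g$, the map $\Phi_n$ is an involution on cochains, hence a bijection. The main task is then to show that $\Phi$ commutes with the total coboundary $b=\sum_{i=0}^{n+1}\delta_i$.

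I would verify this face by face. For the extremal faces, the key input is the definition of the left action on $\widetilde M$: on the one hand $\Phi(\delta_0\varphi)(g_1,\dots,g_{n+1})=\varphi(\a(g_{n+1})^{-1},\dots,\a(g_2)^{-1})\cdot g_1^{-1}$ (using $\a(g^{-1})=\a(g)^{-1}$), and on the other hand $\delta_0\Phi(\varphi)(g_1,\dots,g_{n+1})=g_1\cdot_{\widetilde M}\Phi(\varphi)(\a(g_2),\dots,\a(g_{n+1}))=\Phi(\varphi)(\a(g_2),\dots,\a(g_{n+1}))\cdot g_1^{-1}$, which agrees. A symmetric computation using $\b(\varphi(\cdots))$ handles $\delta_{n+1}$. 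For an interior face $\delta_i$ with $1\le i\le n$, a direct index count using $(g_ig_{i+1})^{-1}=g_{i+1}^{-1}g_i^{-1}$ and the multiplicativity of $\a$ yields $\Phi_{n+1}\circ\delta_i=\delta_{n+1-i}\circ\Phi_n$. Summing over $i$, the internal indices are simply permuted, so $\Phi\circ b=b\circ\Phi$.

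Consequently $\Phi$ is an isomorphism of cochain complexes, inducing the desired isomorphism on cohomology. The principal obstacle is bookkeeping: one has to track carefully how the reversal permutes face maps and how the conversion $g\cdot_{\widetilde M}m=m\cdot g^{-1}$ trades a left action at position~$1$ (in $\widetilde M$) against the right action at position $n+1$ (in $M$). Once the three identities $\Phi\delta_0=\delta_0\Phi$, $\Phi\delta_{n+1}=\delta_{n+1}\Phi$, and $\Phi\delta_i=\delta_{n+1-i}\Phi$ are established, the rest is formal.
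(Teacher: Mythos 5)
Your construction is the same as the paper's: the paper defines $F\colon C^n(G,\widetilde M)\to C^n(G,M)$ by exactly your reversal-and-inversion formula (merely oriented in the opposite direction), verifies the $\delta_0$ case by the computation you sketch, asserts commutation with the remaining cofaces, and gets bijectivity from uniqueness of inverses, so in outline you have reproduced the paper's argument. (Your aside that $\Phi$ is an involution silently uses $(g^{-1})^{-1}=g$; this does hold, since the Hom-invertibility condition is symmetric in $g$ and $g^{-1}$ and inverses are unique, but it deserves a word.)

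The one substantive point concerns the interior faces, and here your bookkeeping is more accurate than the paper's but exposes a gap that both arguments share. Your identity $\Phi\circ\delta_i=\delta_{n+1-i}\circ\Phi$ for $1\le i\le n$ is correct, whereas the paper claims that $F$ commutes with each $\delta_i$ separately; already for $n=2$, $i=1$ one computes $F(\delta_1\varphi)(g_1,g_2,g_3)=\beta\bigl(\varphi((g_2g_3)^{-1},\alpha(g_1)^{-1})\bigr)=\delta_2(F\varphi)(g_1,g_2,g_3)\neq\delta_1(F\varphi)(g_1,g_2,g_3)$. Your conclusion that ``the internal indices are simply permuted, so $\Phi\circ b=b\circ\Phi$'' is therefore valid only for the unsigned sum $b=\sum_i\delta_i$ as literally written in the paper. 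But the coboundary must be the alternating sum $\sum_i(-1)^i\delta_i$ for $b^2=0$ to hold (and that is what the paper uses in its explicit example of $b\phi$), and under the reflection $i\mapsto n+1-i$ the interior terms acquire the factor $(-1)^{n+1}$, so the chain-map property fails in even degrees; no overall degree-dependent sign on $\Phi$ can repair this, since the extremal faces are not reflected. To close the argument one has to compare the target complex with its coface-reversed (``opposite'') cosimplicial structure, or otherwise account for the sign; as written, this step is a genuine gap, albeit one inherited from the paper's own proof.
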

  \begin{proof}
  The space $\widetilde{M}$ is a dual left $G$-module by the Lemma \ref{right to left}.
We define
$$F: C^n(G, \widetilde{M})\longrightarrow C^n(G, M),$$ given by
$$F(\varphi)(g_1, \dots , g_n)=\varphi(g_n^{-1}, \cdots , g_1^{-1}).$$
Here we show $F \d^{\widetilde{M}}_0= \d^{M}_0 F$ where $\d^{\widetilde{M}}$ and $\d^{M}$ stand for the coface maps when
the coefficients are   $\widetilde{M} $ and $M$, respectively.
\begin{align*}
  F \d^{\widetilde{M}}_0\varphi(g_1, \cdots , g_{n+1})&=\d^{\widetilde{M}}_0\varphi(g_{n+1}^{-1}, \cdots, g_1^{-1})\\
  &=g_{n+1}^{-1}\cdot \varphi(\a(g_n^{-1}), \cdots , \a(g_1^{-1}))\\
  &=\varphi(\a(g_n^{-1}), \cdots , \a(g_1^{-1}))\cdot g_{n+1}\\
  &=\varphi(\a(g_n)^{-1}, \cdots , \a(g_1)^{-1})\cdot g_{n+1}\\
  &= F\varphi(\a(g_1), \cdots , \a(g_n))\cdot g_{n+1}\\
  &=\d^{M}_0 F(g_1, \cdots , g_{n+1}).
\end{align*}

Similarly  $F$ commutes with all $\d_i$'s and therefore with the coboundary maps $b=\sum_i \d_i$. Thus $F$ is a
 map of cochain complexes and induces a map on the level of cohomology.
 Furthermore $F$ is a bijection on the level of cochain complexes because  inverse elements are unique in Hom-groups.
  \end{proof}

The following two examples show that the cohomology classes could contain important information about  a Hom-group.
\begin{example}{\rm \textbf{({$H^0$} and twisted invariant elements) }\\
  Let $(G, \a)$ be a Hom-group and $M$ be a dual right $G$-module.
  Then $$H^0(G, M)= \{ m\in M, mg=\b(m), \forall g\in G\}.$$
  So the zero cohomology class  is the subspace  of $M$ which contains those elements that are invariant  under the $G$-action with respect to $\b$.

  }
\end{example}
\begin{example}
  {\rm \textbf{($H^1$ and twisted crossed homomorphisms )}\\

  Let $(G, \a)$ be a Hom-group and $M$ be a dual right $G$-module. To compute $H^1(G, M)$ we need to  compute $ker b$ which
  contains   the 1-cochains $f: G\longrightarrow M$  with $df(g, h)=0$. This means
 $$ f(\a(g))\cdot h -\b(f(gh))+\b(f(\a(h)))=0,$$ or
 $$\b(f(gh))= f(\a(g))\cdot h +\b(f(\a(h))).$$
These maps are called twisted crossed homomorphism of $G$. Also  $Im b$ contains all $ \varphi: G \longrightarrow M$ where there exists $m\in M$ such that
 $\varphi(g)=mg - \b(m)$.  These map are called twisted principal crossed homomorphisms of $G$.  Therefore the first cohomology is the quotient of
 twisted crossed homomorphism by twisted principal crossed homomorphisms.
  }
\end{example}

\begin{example}{\rm
  We recall that  for a Hom-group $G$ the Hom-group algebra $V=\mathbb{K}G$ is  a $G$-bimodule by multiplication of $G$.
  Therefore by examples of the previous section $(\mathbb{K}G)^\ast$ is a $G$-dual bimodule.
  Now we consider the Hom-group cohomology of $G$ with coefficients in the dual $G$-bimodule $(\mathbb{K}G)^\ast$.
  We show that the coboundary map can be written differently in this case.
One Identifies  $\varphi\in C^n(G,G^\ast)$ with
\begin{equation*}\label{aux-identification}
\phi:G^{\times\,n+1}\longrightarrow k,\qquad \phi(g_0, g_1, \cdots g_n):=\varphi(g_1 \odots g_n)(g_0).
\end{equation*}
 As a result the coboundary map will be changed in  to
\begin{align*}
b:C^n(G,G^\ast)&\longrightarrow C^{n+1}(G,G^\ast),\\
b\phi(g_0, \cdots,  g_{n+1})&=\phi(g_0g_1, \a(g_2) , \cdots , \a(g_{n+1}))\\
&\quad+\sum_{j=1}^n (-1)^j\phi(\a(g_0), \cdots, g_jg_{j+1}, \cdots , \a(g_{n+1}))\\
&\quad+(-1)^{n+1} \phi(g_{n+1}g_0\ot \a(g_1), \cdots, \a(g_n)).
\end{align*}
Also  the cosimplicial structure is translated into
\begin{align}\label{aux-cosimplisial-structure-phi}
\begin{split}
    &\d_0\phi(g_0 , \cdots , g_n)= \phi(g_0g_1, \a(g_2) , \cdots, \a(g_{n}))\\
    &\d_i\phi(g_0 , \cdots , g_n)=\phi(\a(g_0) , \cdots , g_i g_{i+1} , \cdots , \a(g_{n})), ~~ 1\leq i \leq n-1\\
    &\d_{n}\phi(g_0 , \cdots , g_n)=\phi(g_{n}g_0, \a(g_1), \cdots ,\a(g_n)).
\end{split}
\end{align}
}
\end{example}


The following proposition shows the functoriality  of Hom-group cohomology with certain coefficients.
\begin{proposition}
 Let $(G, \a_{G})$ and $(G', \a_{G'})$ be two Hom-groups. Then any  morphism $f: G\longrightarrow G'$ of Hom-groups induces the map
\begin{equation*}
\widehat{f}: H_{Hom}^n(G', (\mathbb{K}G')^\ast)\longrightarrow H_{Hom}^n(G, ({\mathbb{K}G})^\ast)
\end{equation*}
\end{proposition}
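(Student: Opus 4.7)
The plan is to exploit the identification $C^n_{Hom}(G,(\mathbb{K}G)^\ast)\cong \mathrm{Maps}(G^{\times n+1},\mathbb{K})$ recorded in the preceding example, and then to define $\widehat{f}$ simply as pullback along $f^{\times (n+1)}$. More precisely, for $\phi'\in C^n_{Hom}(G',(\mathbb{K}G')^\ast)$, viewed as a map $G'^{\times n+1}\to\mathbb{K}$, I would set
\begin{equation*}
(\widehat{f}\phi')(g_0,g_1,\ldots,g_n) := \phi'\bigl(f(g_0),f(g_1),\ldots,f(g_n)\bigr).
\end{equation*}
This gives an element of $C^n_{Hom}(G,(\mathbb{K}G)^\ast)$. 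The goal is then to check that $\widehat{f}$ is a morphism of cochain complexes, and to pass to cohomology.

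The second step is the verification that $\widehat{f}$ commutes with each coface $\delta_i$ of the cosimplicial structure
\begin{align*}
&\delta_0\phi(g_0,\ldots,g_n)=\phi(g_0g_1,\alpha(g_2),\ldots,\alpha(g_n)),\\
&\delta_i\phi(g_0,\ldots,g_n)=\phi(\alpha(g_0),\ldots,g_ig_{i+1},\ldots,\alpha(g_n)),\qquad 1\leq i\leq n-1,\\
&\delta_n\phi(g_0,\ldots,g_n)=\phi(g_ng_0,\alpha(g_1),\ldots,\alpha(g_n)).
\end{align*}
The two defining properties of a Hom-group morphism, namely multiplicativity $f(gh)=f(g)f(h)$ and compatibility with the twist $f\circ\alpha_G=\alpha_{G'}\circ f$, are exactly what is needed to push $f$ inside each argument of $\phi'$. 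For instance, $\widehat{f}(\delta_0\phi')(g_0,\ldots,g_n)=\phi'(f(g_0)f(g_1),\alpha_{G'}(f(g_2)),\ldots)=\phi'(f(g_0g_1),f(\alpha_G(g_2)),\ldots)=\delta_0(\widehat{f}\phi')(g_0,\ldots,g_n)$, and the other $\delta_i$ are analogous. Summing with signs gives $\widehat{f}\circ b=b\circ\widehat{f}$.

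Once $\widehat{f}$ is a chain map, the induced map on cohomology is automatic, completing the construction of $\widehat{f}\colon H^n_{Hom}(G',(\mathbb{K}G')^\ast)\to H^n_{Hom}(G,(\mathbb{K}G)^\ast)$. The only subtle point is that the coefficient module is not just any dual module, but specifically $(\mathbb{K}G)^\ast$, so that the coface formulas used here depend purely on the multiplication and the twist of the Hom-group, and involve no external action to transport from $G'$ to $G$; this is what lets a single Hom-group morphism $f$ induce a map between the two different coefficient systems. Functoriality $(\widehat{g\circ f})=\widehat{f}\circ\widehat{g}$ and $\widehat{\mathrm{id}}=\mathrm{id}$ are then immediate from the pullback definition, so no further work is needed. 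I expect no serious obstacle: the whole argument is dictated by the observation that both the cosimplicial formulas and the hypotheses on $f$ refer only to $\mu$ and $\alpha$.
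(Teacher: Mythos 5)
Your proposal is correct and follows essentially the same route as the paper: the paper also defines the cochain map by $F\varphi(g_0,\ldots,g_n)=\varphi(f(g_0),\ldots,f(g_n))$ under the identification with maps $G^{\times n+1}\to\mathbb{K}$, and verifies commutation with the coface maps of \eqref{aux-cosimplisial-structure-phi} using exactly the two properties $f(gk)=f(g)f(k)$ and $f\circ\a_G=\a_{G'}\circ f$ (showing the $\delta_0$ case explicitly, as you do). Your closing remark that the argument works precisely because the coefficients $(\mathbb{K}G)^\ast$ make the cofaces depend only on $\mu$ and $\a$ matches the paper's own observation following the proof.
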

\begin{proof}
  We define $F: C^n(G',\mathbb{K}G'^\ast)\longrightarrow C^n(G,\mathbb{K}G^\ast)$ given by
  $$F\varphi(g_0, \cdots, g_n)= \varphi (f(g_0), \cdots, f(g_n))$$
  The map $F$ commutes with all differentials $\delta_i$ in \eqref{aux-cosimplisial-structure-phi} because $f(\a_G(g))=\a_{G'}(f(g))$ and $ f(gk)=f(g) f(k)$. Here we only show that $F$ commutes with $\d_0$ and we leave the other commutativity relations to the reader.
  \begin{align*}
    &\d_0^GF\varphi (g_0, \cdots , g_n)\\
    &= F\varphi(g_0g_1, \alpha_G(g_2), \cdots , \a_G(g_n))\\
    &=\varphi(f(g_0g_1), f(\a_G(g_2)), \cdots , f(\a_G(g_n)))\\
    &=\varphi(f(g_0) f(g_1), \a_{G'}(f(g_2)), \cdots, \a_{G'}(f(g_n)))\\
    &=\d_0^{G'}(f(g_0), \cdots , f(g_n))\\
    & =F\d_0^{G'}\varphi (g_0, \cdots , g_n).
  \end{align*}
\end{proof}
One notes that even in the case of associative groups, $H^*(G, \mathbb{K}G)$ has not similar functoriality property.
This reminds us that  the coefficients $(\mathbb{K}G)^*$ as dual $G$-module have an important rule in  Hom-group cohomology.

\begin{example}{\rm \textbf{(Trace $0$-cocycles})
     Let $(G,  \a)$ be a Hom-group. Using the differentials in \eqref{aux-cosimplisial-structure-phi} we have
\begin{equation*}
  H_{Hom}^{0}(G, \mathbb{K}G^*)=\{ \varphi: G\longrightarrow k, \quad\quad \varphi(gh)=\varphi(hg)\}.
\end{equation*}
More precisely,   $0$-cocycles  of $G$ are trace maps on $G$.
}
\end{example}

Here we aim to find out the relation between Hom-group cohomology of a Hom-group $G$ and the Hochschild cohomology of the  Hom-group algebra $\mathbb{K}G$.
 For this we recall the Hochschild cohomology of Hom-algebras introduced in \cite{hss}. First we need to recall the definition of dual modules for Hom-algebras from \cite{hss}.
  Let $(\mathcal{A}, \a)$ be a Hom-algebra. A vector space $V$ is called a dual left $\mathcal{A}$-module if there are linear maps $\cdot: \mathcal{A}\ot V\longrightarrow V$, and  $\b: V\longrightarrow V$ where
  \begin{equation}
    a\cdot (\a(b)\cdot v)=\b((ab)\cdot v).
  \end{equation}
Similarly, $V$ is called a dual right $\mathcal{A}$-module if $v\cdot (\a(a))\cdot b= \b(v\cdot (ab))$. Finally, we call $V$ a dual $\mathcal{A}$-bimodule if $\a(a)\cdot (v\cdot b)=(a\cdot v)\cdot \a(b).$
Let   $(\mathcal{A},  \alpha)$ be  a Hom-algebra,  $(M, \beta)$  be a dual $\mathcal{A}$-bimodule and  $C^n(\mathcal{A}, M)$ be the space of all $k$-linear maps $\varphi: \mathcal{A}^{\ot n}\longrightarrow M$. Then the authors in \cite{hss} showed that
\begin{equation*}
C^\ast(\mathcal{A}, M)=\bigoplus_{n\geq0} C^n(\mathcal{A}, M),
\end{equation*}
with the coface maps
\begin{align}\label{aux-cosimplisial-structure-vp}
\begin{split}
&d_0\varphi(a_1\odots a_{n+1})=a_1\cdot \varphi(\a(a_2)\odots \a(a_{n+1}))\\
&d_i\varphi(a_1\odots a_{n+1})=\b(\varphi(\a(a_1)\odots a_i a_{i+1}\odots \a(a_{n+1}))), ~~ 1\leq i \leq n\\
&d_{n+1}\varphi(a_1\odots a_{n+1})= \varphi(\a(a_1)\odots \a(a_{n}))\cdot a_{n+1}.\\
\end{split}
\end{align}
is a cosimplicial module. The cohomology of the complex $(C^\ast(\mathcal{A}, M),b)$, where $b=\sum_{i=0}^{n+1}d_i$, is  the Hochschild cohomology of the Hom-algebra $\mathcal{A}$ with coefficients in  $M$, and is denoted by $H^\ast(\mathcal{A}, M)$.

The following theorem shows that  if the dual left $G$-module
 $M$ satisfies an extra condition $\a(a)\cdot \b(m)= \b(a\cdot m)$, $a\in \mathbb{K}G, m\in M$, then
  the group cohomology of a Hom-group $G$ with coefficients in   $M$ will reduce to Hochschild cohomology of
  the Hom-group algebra $\mathbb{K}G$ with coefficients in the dual $\mathbb{K}G$-bimodule $\widetilde{M}=M$ where the left action
  is coming from the left action of $G$ and the  right action is trivial. One notes  that if $\a=\b=\Id$ then we
   obtain the corresponding well-known result in  the associative case.

  \begin{theorem}\label{homology-relations}

  Let $(G, \a)$ be a Hom-group and $M$ a dual left $G$-module.
  If  $\a(a)\cdot \b(m)= \b(a\cdot m)$, then
    $\widetilde{M}=M$ is a dual $\mathbb{K}G$-bimodule where the left action is coming from the
    original left action of $G$  and    the right action is the trivial action $m\cdot g := m\cdot 1 = \b(m)$. Furthermore

    $$H^*(G, M)\cong H^\ast(\mathbb{K}G,\widetilde{M}).$$
  \end{theorem}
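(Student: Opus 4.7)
The plan is to verify the bimodule structure first and then produce an isomorphism of cosimplicial modules at the level of cochains, from which the cohomology isomorphism follows.

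First I would check that $\widetilde{M}=M$ with the stated actions is indeed a dual $\mathbb{K}G$-bimodule. The dual left $\mathbb{K}G$-module axiom $a\cdot(\alpha(b)\cdot v)=\beta((ab)\cdot v)$ follows immediately from the corresponding Hom-group axiom \eqref{left-dual-module} by extending $\mathbb{K}$-linearly in both arguments. For the trivial right action $v\cdot a:=\beta(v)$, the dual right $\mathbb{K}G$-module axiom reduces to $(v\cdot\alpha(a))\cdot b=\beta(v)\cdot b=\beta^2(v)$ on one side, and $\beta(v\cdot(ab))=\beta^2(v)$ on the other, which matches. The compatibility $v\cdot 1=\beta(v)$ holds by definition. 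The only nontrivial check is the bimodule condition: on the one hand $\alpha(a)\cdot(v\cdot b)=\alpha(a)\cdot\beta(v)$, and on the other $(a\cdot v)\cdot\alpha(b)=\beta(a\cdot v)$. These are equal precisely by the extra hypothesis $\alpha(a)\cdot\beta(v)=\beta(a\cdot v)$, which is the reason this hypothesis appears.

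Next I would construct the chain-level isomorphism. Since $G$ is a $\mathbb{K}$-basis of $\mathbb{K}G$, the universal property of tensor products gives a natural bijection
\begin{equation*}
\Phi:C^n(\mathbb{K}G,\widetilde{M})\longrightarrow C^n_{Hom}(G,M),\qquad \Phi(\varphi)(g_1,\ldots,g_n)=\varphi(g_1\otimes\cdots\otimes g_n),
\end{equation*}
identifying $\mathbb{K}$-linear maps $(\mathbb{K}G)^{\otimes n}\to M$ with set maps $G^{\times n}\to M$. I would then verify that $\Phi$ intertwines the coface operators. The formulas for $\delta_0$ and for the inner $\delta_i$ ($1\le i\le n$) match those of \eqref{aux-cosimplisial-structure-vp} verbatim (up to the identification), since the left actions agree and the inner faces are the same twisted-multiplication-and-$\beta$ operators on both sides. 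The only face needing attention is $\delta_{n+1}$: on the Hom-algebra side one has
\begin{equation*}
d_{n+1}\varphi(g_1\otimes\cdots\otimes g_{n+1})=\varphi(\alpha(g_1)\otimes\cdots\otimes\alpha(g_n))\cdot g_{n+1},
\end{equation*}
which by the triviality of the right action equals $\beta(\varphi(\alpha(g_1)\otimes\cdots\otimes\alpha(g_n)))$, exactly the Hom-group formula for $\delta_{n+1}$ in \eqref{aux-cosimplisial-structure-vp}.

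Hence $\Phi$ is an isomorphism of cosimplicial modules, and in particular commutes with the coboundary $b=\sum_i d_i$. Passing to cohomology yields the claimed isomorphism $H^*(G,M)\cong H^*(\mathbb{K}G,\widetilde{M})$. The main obstacle, and really the point of the theorem, is the bimodule check: without the extra hypothesis $\alpha(a)\cdot\beta(v)=\beta(a\cdot v)$ the left $G$-action and the trivial right $G$-action do not commute in the required Hom-sense, so $\widetilde{M}$ fails to be a dual $\mathbb{K}G$-bimodule and the Hom-Hochschild complex is not even defined; everything else is a formal translation between the cosimplicial structures.
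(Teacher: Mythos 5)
Your proposal is correct and follows essentially the same route as the paper: use the extra hypothesis $\alpha(a)\cdot\beta(v)=\beta(a\cdot v)$ to verify the dual bimodule condition $\alpha(a)\cdot(v\cdot b)=(a\cdot v)\cdot\alpha(b)$, then observe that with the trivial right action the Hochschild cofaces coincide with the Hom-group cofaces, so the obvious identification of cochains is an isomorphism of complexes. Your write-up is somewhat more explicit than the paper's (checking the right-module axiom and the $\delta_{n+1}$ face separately), but there is no difference in substance.
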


  \begin{proof}
The condition $\a(a)\cdot \b(m)= \b(a\cdot m)$ insures that $\widetilde{M}$ with the given dual left action and the right trivial action $m\cdot g = m\cdot 1 = \b(m)$ is a dual $G$-bimodule, and therefore a dual $\mathbb{K}G$-bimodule, because
$$\a(g)\cdot (m\cdot k)= \a(g)\cdot \b(m)= \b(g\cdot m)=(g\cdot m)\cdot \a(k).$$
Now all differentials $d_i$ of Hochschild cohomology of $\mathbb{K}G$ will be the same as the ones, $\d_i$, for group cohomology of $G$. Therefore the identity map $\Id: C^n(G, M)\longrightarrow C^n(\mathbb{K}G, \widetilde{M})$ induces an isomorphism on the level of complexes.
  \end{proof}

 One knows that if $G$ is an  associative group, and $\mathbb{K}G$ the group algebra, then any  $\mathbb{K}G$-bimodule $M$  can be turned in to a
$G$-right (or left) module by the adjoint action. The process of having the similar
result for  Hom-groups is not clear specially because we do not know how to define the adjoint action for Hom-groups.


\section{Hom-group homology}

In this section we introduce homology theory for Hom-groups. To do this we need to use the notion of modules instead of dual modules for the coefficients.

\begin{theorem}
   Let $(G,   \alpha)$ be a Hom-group and $(M, \beta)$ be a right
    $G$-module satisfying
    $$\b(m\cdot g)= \b(m)\cdot \a(g).$$
     Let $C_n^{Hom}(G, M)= M\times G^{\times n}$. Then
\begin{equation*}
C_\ast^{Hom}(G, M)=\bigoplus_{n\geq0} C_n^{Hom}(G, M),
\end{equation*}
with the face maps
\begin{align}\label{aux-cosimplisial-structure-vp}
\begin{split}
&d_0(m, g_1, \cdots ,  g_{n})=(m\cdot g_1, \a(g_2), \cdots , \a(g_{n}))\\
&d_i(m, g_1 , \cdots ,g_{n+1})=(\b(m), \a(g_1), \cdots , g_i g_{i+1}, \cdots , \a(g_{n})), ~~ 1\leq i \leq n-1\\
&d_{n}(m, g_1, \cdots , g_{n})= (\b(m), \a(g_1), \cdots , \a(g_{n-1})),\\
\end{split}
\end{align}
is a simplicial module.
\end{theorem}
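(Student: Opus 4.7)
The plan is to verify the standard simplicial identities $d_i d_j = d_{j-1} d_i$ for $0 \leq i < j \leq n$, organizing the verification by which axiom is being invoked. Since the face maps are defined piecewise (three different shapes for $i=0$, $1 \leq i \leq n-1$, and $i=n$), the natural case split pairs up the shape of $d_j$ with that of $d_i$, giving roughly five families of identities.

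First I would dispatch the cases where both indices are "middle" indices, i.e.\ $1 \leq i < j \leq n-1$. Writing out $d_i d_j$ and $d_{j-1} d_i$, $m$ gets $\beta$ applied twice on each side, and the only group-theoretic content is a single multiplication $g_k g_{k+1}$ at two different positions with $\alpha$ applied to all other slots. When $j > i+1$ the two multiplications happen at disjoint positions and the equality reduces to the multiplicativity of $\alpha$, namely $\alpha(g_k g_{k+1}) = \alpha(g_k)\alpha(g_{k+1})$. When $j = i+1$ the two multiplications collide, and the identity reduces exactly to Hom-associativity $\alpha(g_i)(g_{i+1}g_{i+2}) = (g_i g_{i+1})\alpha(g_{i+2})$ of $(G,\alpha)$. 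The identities $d_i d_n = d_{n-1} d_i$ for $1 \leq i \leq n-1$ are analogous: $d_n$ is a pure drop-last map, and the required equality once again comes down to multiplicativity of $\alpha$ (and the fact that $\beta$ appears equally on both sides).

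Next I would handle the boundary identity $d_0 d_1 = d_0 d_0$. Applying $d_0$ twice to $(m,g_1,\ldots,g_n)$ gives $((m\cdot g_1)\cdot\alpha(g_2),\alpha^2(g_3),\ldots,\alpha^2(g_n))$, while $d_0 d_1$ produces $(\beta(m)\cdot(g_1 g_2),\alpha^2(g_3),\ldots,\alpha^2(g_n))$. Equality is precisely the right $G$-module axiom
\begin{equation*}
\beta(m)\cdot(g_1 g_2) \;=\; (m\cdot g_1)\cdot\alpha(g_2),
\end{equation*}
which is part of Definition \ref{modules over Hom-groups}. This is the place where having a genuine $G$-module (not a dual $G$-module) is essential.

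The remaining family, and the main obstacle, is $d_0 d_j = d_{j-1} d_0$ for $2 \leq j \leq n$. Both sides reduce without difficulty to chains of the form $(Y,\alpha^2(g_2),\ldots,\alpha(g_j)\alpha(g_{j+1}),\ldots,\alpha^2(g_n))$ in the interior using multiplicativity of $\alpha$, so the matter comes down to comparing the leading entry: $d_0 d_j$ gives $Y = \beta(m)\cdot\alpha(g_1)$, whereas $d_{j-1} d_0$ gives $Y = \beta(m\cdot g_1)$. Equating these is exactly the extra hypothesis $\beta(m\cdot g) = \beta(m)\cdot\alpha(g)$ imposed on $M$ in the statement; without it one cannot commute $\beta$ past the module action in order to align the $d_0$ step with the $\beta$-producing step $d_i$, $i\geq 1$. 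With this compatibility in hand all five families close, so the collection $\{d_i\}$ defines a simplicial structure on $C_\ast^{Hom}(G,M)$, completing the proof.
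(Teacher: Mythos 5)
Your proposal is correct and follows essentially the same route as the paper: a direct case-by-case verification of the simplicial identities $d_id_j=d_{j-1}d_i$, using the right $G$-module axiom for $d_0d_1=d_0d_0$, the extra hypothesis $\b(m\cdot g)=\b(m)\cdot\a(g)$ for $d_0d_j=d_{j-1}d_0$ with $j\geq 2$, and Hom-associativity together with multiplicativity of $\a$ for the interior identities. The only difference is that you spell out the adjacent-middle-index case (where Hom-associativity enters), which the paper leaves to the reader.
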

\begin{proof}
First we show $d_0d_0=d_0d_1$.
  \begin{align*}
    d_0d_0(m, g_1, \cdots ,  g_{n})&=d_0(m\cdot g_1, \a(g_2), \cdots , \a(g_{n}))\\
    &=((m\cdot g_1)\cdot \a(g_2), \a^2(g_3), \cdots , \a^2(g_{n}))\\
    &= ((m\cdot g_1)\cdot \a(g_2), \a^2(g_3), \cdots , \a^2(g_{n}))\\
     &= (\b(m)\cdot (g_1g_2), \a^2(g_3), \cdots , \a^2(g_{n}))\\
     &= d_0(\b(m), g_1g_2, \a(g_3), \cdots , \a(g_{n}))\\
     &=d_0d_1(m, g_1, \cdots ,  g_{n}).
  \end{align*}
  We used the dual right property in the fourth equality. Here we show $d_0 d_j=d_{j-1}d_0$ for $j> 1$.
  \begin{align*}
    d_0d_j(m, g_1,&\cdots, g_n)\\
    &=d_0(\b(v)\ot \a(a_1)\odots a_j a_{j+1}\odots \a(a_n)) \\
    &=\b(v)\cdot\a(a_1)\ot \a^2(a_2)\odots \a(a_j a_{j+1})\odots  \a^2(a_n) \\
    &=\b(v\cdot a_1)\ot \a^2(a_2)\odots \a(a_j )\a(a_{j+1})\odots  \a^2(a_n) \\
    &=d_{j-1}(v\cdot a_1\ot \a(a_2)\odots \a(a_n)) \\
    &=d_{j-1}\d_0(m\ot a_1\odots a_n).
  \end{align*}
  We used the condition $\b(m\cdot g)= \b(m)\a(g)$  in the third equality.   Now we show $d_0d_n=d_{n-1}d_0$. We have,
  \begin{align*}
    d_0d_n(m, g_1&, \cdots, g_n)\\
    &=d_0(\b(m), \a(g_1)\cdots , \a(g_{n-1}))\\
    &=(\b(m)\cdot \a(g_1),  \a^2(g_2), \cdots \a^2(g_{n-1}))\\
    &=\b(m\cdot g_1),  \a^2(g_2)\cdots \a^2(g_{n-1})\\
    &=d_{n-1}(m\cdot g_1, \a(g_2)\cdots, \a(g_n))\\
    &=d_{n-1}\d_0(m, g_1\cdots g_n).
  \end{align*}
  Now we show $d_i d_n=d_{n-1}d_i$.
  \begin{align*}
    d_i d_n(m, g_1&, \cdots ,g_n)\\
    &=d_i (\b(m), \a(g_1), \cdots, \a(g_{n-1}))\\
    &= (\b^2(m), \a^2(g_1), \cdots, \a(g_i)\a(g_{i+1}), \cdots , \a^2(g_{n-1}))\\
    &= d_{n-1}(\b(m), \a(g_1), \cdots, g_i g_{i+1}, \cdots, \a(g_n))\\
    &=d_{n-1}d_i(m, g_1, \cdots ,g_n).
  \end{align*}
  The rest of the commutativity relations can also be verified.
\end{proof}

We define the boundary map $b= \sum_{i=0}^{n} d_i$. By the previous Theorem we have $b^2=0$.
The homology of the chain complex
$$
\begin{CD}
0 @<b<< M= C^{Hom}_0(G, M) @<b<< C^{Hom}_1(G, M) @<b<< C^{Hom}_2(G, M) @<b<< C^{Hom}_3(G, M) \ldots
\end{CD}\\
$$
is called Hom-group homology of $G$ with coefficients in $M$.
Similarly one has Hom-group homology with coefficients in left modules as follows.

\begin{proposition}

   Let $(G,   \alpha)$ be a Hom-group and $(M, \beta)$ be a left
    $G$-module satisfying
    $$\b(g\cdot m)= \a(g)\cdot \b(m)  .$$
     Let $C_n^{Hom}(G, M)=G^{\times n}$. Then
\begin{equation*}
C_\ast^{Hom}(G, M)=\bigoplus_{n\geq0} C_n^{Hom}(G, M),
\end{equation*}
with the face maps
\begin{align}\label{aux-cosimplisial-structure-vp}
\begin{split}
&d_0( g_1, \cdots ,  g_{n}, m)=(\a( g_1), \a(g_2), \cdots ,\a( g_{n-1}), g_n\cdot m)\\
&d_i(g_1 , \cdots ,g_{n+1}, m)=( \a(g_1), \cdots , g_i g_{i+1}, \cdots , \a(g_{n}),  \b(m)), ~~ 1\leq i \leq n-1\\
&d_{n}(g_1, \cdots , g_{n}, m)= ( \a(g_2), \cdots , \a(g_{n}), \b(m)).\\
\end{split}
\end{align}
is a simplicial module.

\end{proposition}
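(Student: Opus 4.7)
The plan is to verify the simplicial identities $d_i d_j = d_{j-1} d_i$ for $0 \le i < j \le n$, mirroring the strategy of the preceding theorem but with the roles of the two extreme faces interchanged. I would organise the verification into three blocks, in increasing order of difficulty.

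The first block is the interior-interior case $1 \le i < j \le n-1$: both sides of the identity apply $\alpha$ to every surviving group slot and $\beta$ to the module, so the equality reduces, for $j = i+1$, to the Hom-associativity $(gh)\alpha(k) = \alpha(g)(hk)$ of $G$ at the position where the two merges meet, and for $j > i+1$ to a trivial reshuffling handled by the multiplicativity of $\alpha$. This step is essentially the same as the corresponding step in the preceding theorem.

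The second block consists of the mixed identities pairing an interior merge with one of the extreme faces $d_0$, $d_n$. The model computation is $d_0 d_n = d_{n-1} d_0$: after cancelling the twisted $\alpha$'s via multiplicativity, the two sides differ only in the module entry, one producing $\alpha(g_n)\cdot\beta(m)$ and the other $\beta(g_n \cdot m)$, so the equality is forced by the hypothesis $\beta(g\cdot m) = \alpha(g)\cdot\beta(m)$. The remaining mixed identities are verified in the same spirit, invoking this hypothesis exactly once whenever a $\beta$ must be carried across a left action in order to match the two sides.

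The third and hardest block concerns the identity in which $d_0$ meets the merge face adjacent to the module action, i.e.\ the analogue of $d_0 d_0 = d_0 d_1$ from the right-module theorem. Both sides ultimately produce a term expressing the action of two consecutive group elements on $m$, and matching them requires exactly the left $G$-module axiom $(gh)\cdot\beta(m) = \alpha(g)\cdot(h\cdot m)$, which here plays the role that the right-module property played in the preceding theorem. The main obstacle throughout is bookkeeping: keeping track of the correct powers of $\alpha$ (and $\beta$) that accumulate in each surviving slot after a double face. No new conceptual ingredient beyond the two axioms---the left Hom-module property and the compatibility $\beta(g\cdot m) = \alpha(g)\cdot\beta(m)$ imposed in the hypothesis---is required, and the final chain complex is obtained from $b = \sum_{i=0}^n (-1)^i d_i$ exactly as in the right-module case.
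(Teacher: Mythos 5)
Your proposal is correct and follows exactly the route the paper takes: the paper's own proof is simply ``similar to the previous theorem,'' and your three blocks (Hom-associativity for the interior faces, the compatibility $\b(g\cdot m)=\a(g)\cdot \b(m)$ for the mixed identities such as $d_0d_n=d_{n-1}d_0$, and the left $G$-module axiom $(gh)\cdot\b(m)=\a(g)\cdot(h\cdot m)$ where the action face meets its adjacent merge) are precisely the ingredients used in that earlier detailed verification, with the extreme faces interchanged as you note.
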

\begin{proof}
  Similar as the  previous theorem.
\end{proof}
Here we state the relation between Hom-group homology with coefficients in left and right modules.

\begin{proposition}
  Let $(G, \a)$  be a Hom-group and $M$ be a  right $G$-module.
  Then   $\widetilde{M} =M$  with the left action $g\cdot m= m\cdot g^{-1}$, is a left $G$-module and
  $$H_*(G, M)\cong H_*(G, \widetilde{M}).$$
  \end{proposition}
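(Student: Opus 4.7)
The first assertion, that $\widetilde M$ with $g \cdot m := m\cdot g^{-1}$ is a left $G$-module, is already covered by Lemma \ref{right to left-2}. What still needs a short verification is the compatibility condition $\beta(g\cdot m)=\alpha(g)\cdot\beta(m)$ that is implicit in the hypothesis of the left-module homology proposition. Starting from the analogous right-module hypothesis $\beta(m\cdot g)=\beta(m)\cdot\alpha(g)$ on $M$, I would compute
\[
\beta(g\cdot m)=\beta(m\cdot g^{-1})=\beta(m)\cdot\alpha(g^{-1})=\beta(m)\cdot\alpha(g)^{-1}=\alpha(g)\cdot\beta(m),
\]
using the identity $\alpha(g^{-1})=\alpha(g)^{-1}$ established right after Definition \ref{def-hom}.

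For the homological part, the plan is to mirror the earlier cohomology proposition (with $F\varphi(g_1,\dots,g_n)=\varphi(g_n^{-1},\dots,g_1^{-1})$) on the chain side. Concretely, I will define
\[
F: C_n^{Hom}(G,M)\longrightarrow C_n^{Hom}(G,\widetilde M),\qquad
F(m,g_1,\dots,g_n)=(g_n^{-1},\dots,g_1^{-1},m).
\]
Because inverses are unique in a Hom-group and $(g^{-1})^{-1}=g$, this map is a bijection in each degree; the inverse map sends $(h_1,\dots,h_n,m)$ to $(m,h_n^{-1},\dots,h_1^{-1})$.

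The main step is verifying that $F$ is a chain map. Using $(g_ig_{i+1})^{-1}=g_{i+1}^{-1}g_i^{-1}$ and $\alpha(g^{-1})=\alpha(g)^{-1}$, together with the definition $g\cdot m = m\cdot g^{-1}$ on $\widetilde M$, I expect to obtain
\[
F\,d_0^M=d_0^{\widetilde M}\,F,\qquad F\,d_n^M=d_n^{\widetilde M}\,F,\qquad F\,d_i^M=d_{n-i}^{\widetilde M}\,F\;\;(1\le i\le n-1).
\]
For example, for the endpoint case $d_0^M(m,g_1,\dots,g_n)=(m\cdot g_1,\alpha(g_2),\dots,\alpha(g_n))$ applying $F$ gives $(\alpha(g_n^{-1}),\dots,\alpha(g_2^{-1}),m\cdot g_1)$, which is exactly $d_0^{\widetilde M}$ applied to $F(m,g_1,\dots,g_n)$ once one rewrites $g_1^{-1}\cdot m = m\cdot g_1$. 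The intermediate cases require careful index bookkeeping: under the reversal $h_j=g_{n-j+1}^{-1}$, the pair $(g_i,g_{i+1})$ whose product appears in $d_i^M$ sits at positions $(n-i,n-i+1)$ on the $\widetilde M$ side, which is why the index reverses to $n-i$. Since the boundary operator is the unsigned sum $b=\sum_{i=0}^n d_i$, the rearrangement $i\mapsto n-i$ leaves the total sum invariant, so $Fb^M=b^{\widetilde M}F$. Hence $F$ is an isomorphism of chain complexes and induces the claimed isomorphism $H_\ast(G,M)\cong H_\ast(G,\widetilde M)$.

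The only subtlety I anticipate is the index-matching in the intermediate face maps; once one commits to the convention $h_j=g_{n-j+1}^{-1}$, the remaining identities are routine consequences of the anti-morphism property of inversion and the multiplicativity of $\alpha$.
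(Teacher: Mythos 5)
Your argument follows essentially the same route as the paper's: the coordinate-reversal map $F(m,g_1,\dots,g_n)=(g_n^{-1},\dots,g_1^{-1},m)$ (the paper uses its inverse, mapping $C_n(G,\widetilde M)$ to $C_n(G,M)$), which is a bijection by uniqueness of inverses, combined with Lemma \ref{right to left-2} for the module structure. Your write-up is in fact slightly more complete than the paper's, since you make explicit both the index reversal $F\,d_i^M=d_{n-i}^{\widetilde M}\,F$ for the intermediate faces (the paper only checks the endpoint faces $d_0$ and $d_n$) and the compatibility $\beta(g\cdot m)=\alpha(g)\cdot\beta(m)$ needed for $H_*(G,\widetilde M)$ to be defined.
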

  \begin{proof}
 The space $\widetilde{M}$ is a left module by the Lemma \ref{right to left-2}.
We set
$$F: C_n(G, \widetilde{M})\longrightarrow C_n(G, M),$$ given by
$$F(g_1, \dots , g_n, m)=(m, g_n^{-1}, \cdots , g_1^{-1}).$$
Here we show  $d_0^M F= F d_0^{\widetilde{M}}$.
\begin{align*}
  &d_0^M F( g_1, \cdots ,  g_{n}, m)\\
  &=d_0^M(m, g_n^{-1}, \cdots , g_1^{-1})\\
  &=(m\cdot g_n^{-1}, \a(g_{n-1}^{-1}), \cdots , \a(g_1^{-1}))\\
   &=(m\cdot g_n^{-1}, \a(g_{n-1})^{-1}, \cdots , \a(g_1)^{-1})\\
   &=F( \a(g_1), \cdots , \a(g_{n-1}), m\cdot g_n^{-1}))\\
  &=F( \a(g_1), \cdots , \a(g_{n-1}), g_n\cdot m))\\
  &=F d_0^{\widetilde{M}}( g_1, \cdots ,  g_{n}, m).
\end{align*}
Now we prove
 $d_n^M F= F d_n^{\widetilde{M}}$.
\begin{align*}
  &d_n^M F( g_1, \cdots ,  g_{n}, m)\\
    &=  d_n^M(m, g_n^{-1}, \cdots , g_1^{-1})\\
    &= (\b(m), \a(g_n^{-1}), \cdots , \a(g_2^{-1}))\\
    &=(\b(m), \a(g_n)^{-1}, \cdots , \a(g_2)^{-1})\\
    &=F(\a( g_2), \cdots ,  \a(g_{n}), \b(m))\\
    &= F d_n^{\widetilde{M}}( g_1, \cdots ,  g_{n}, m).
\end{align*}
  \end{proof}

Here we show that the Hom-group homology of a Hom-group with coefficients in a right (left) module
 reduces to Hochschild homology of Hom-group algebra with coefficients in a certain bimodule.
 To do this we remind that the  authors in \cite{hss} introduce Hochschild homology of a Hom-algebra $A$ as follows.
Let $(A,\mu, \alpha)$ be a Hom-algebra, and $(V, \beta)$ be an $A$-bimodule \cite{hss} such that
$$\b(v\cdot a) = \b(v)\cdot \a(a) \quad \text{and} \quad \b(a\cdot v)=\a(a)\cdot \b(v).$$ Then
\begin{equation*}
C^{Hom}_\ast(A, V)=\bigoplus_{n\geq 0}C^{Hom}_n(A, V),\qquad C^{Hom}_n(A, V):=V\ot A^{\ot n},
\end{equation*}
with the face maps
\begin{align*}
&\d_0(v\ot a_1\ot \cdots \ot a_{n})= v \cdot a_1 \ot \a(a_2)\ot \cdots \ot \a(a_{n})\\
&\d_i(v\ot a_1\ot \cdots \ot a_{n})=\b(v)\ot \a(a_1) \cdots \ot a_i a_{i+1}\ot \cdots\ot \alpha(a_{n}), ~~ 1\leq i \leq n-1\\
&\d_{n}(v\ot a_1\ot \cdots \ot a_n)= a_{n} \cdot v \ot \a(a_1)\ot  \cdots\ot \a(a_{n-1}),
  \end{align*}
is a simplicial module.
Similar to the  cohomology case we have the following result.

\begin{theorem}
  Let $(G, \a)$ be a Hom-group and $M$ be a  right $G$-module  satisfying $\b(m\cdot g)=  \b(m) \cdot  \a(g)  .$
   Let  $\widetilde{M}=M$  be a left module with the trivial left action $ g \cdot m:= m\cdot 1 = \b(m)$. Then $\widetilde{M}$ will be a $\mathbb{K}G$-bimodule and furthermore
    $$H_*(G, M)\cong H_\ast(\mathbb{K}G, \widetilde{M}).$$
  \end{theorem}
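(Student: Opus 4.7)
The plan is to mirror the strategy used in Theorem \ref{homology-relations} for the cohomology case: after checking that $\widetilde{M}$ becomes a genuine $\mathbb{K}G$-bimodule, the two simplicial modules coincide termwise and face-wise, so the identity on chains is an isomorphism of complexes.

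First I would verify that $\widetilde{M}$, equipped with the original right $G$-action inherited from $M$ and the trivial left action $g\cdot m:=m\cdot 1=\b(m)$, is a $G$-bimodule (and hence extends by linearity to a $\mathbb{K}G$-bimodule). The left-module axiom $(gh)\cdot \b(m)=\a(g)\cdot (h\cdot m)$ reduces, after expanding both sides with the trivial action, to $\b^{2}(m)=\b^{2}(m)$; unitality is immediate. The bimodule compatibility $\a(g)\cdot (m\cdot k)=(g\cdot m)\cdot \a(k)$ unfolds as
\begin{equation*}
\a(g)\cdot (m\cdot k)=\b(m\cdot k)=\b(m)\cdot \a(k)=(g\cdot m)\cdot \a(k),
\end{equation*}
where the middle equality is exactly the extra hypothesis $\b(m\cdot g)=\b(m)\cdot \a(g)$. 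I would also note that the further compatibilities $\b(v\cdot a)=\b(v)\cdot \a(a)$ and $\b(a\cdot v)=\a(a)\cdot \b(v)$ required by \cite{hss} hold: the first is the hypothesis, while the second collapses to $\b^{2}(m)=\b^{2}(m)$ under the trivial left action.

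Next I would compare face maps term by term. Both complexes have underlying module $M\otimes \mathbb{K}G^{\otimes n}$ in degree $n$ (identifying $M\times G^{\times n}$ with its $\mathbb{K}$-linear span in the obvious way). The face $d_{0}$ and the inner faces $d_{i}$ for $1\le i\le n-1$ in the Hom-group complex and in the Hochschild complex are given by identical formulas. The only a priori difference is the last face: in the Hom-group complex
\begin{equation*}
d_{n}(m,g_{1},\dots,g_{n})=(\b(m),\a(g_{1}),\dots,\a(g_{n-1})),
\end{equation*}
while in the Hochschild complex
\begin{equation*}
d_{n}(v,a_{1},\dots,a_{n})=(a_{n}\cdot v,\a(a_{1}),\dots,\a(a_{n-1})).
\end{equation*}
Because the left $\mathbb{K}G$-action on $\widetilde{M}$ is trivial, $a_{n}\cdot v=\b(v)$, so the two formulas agree.

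Finally I would conclude that $\Id: C^{Hom}_{n}(G,M)\longrightarrow C^{Hom}_{n}(\mathbb{K}G,\widetilde{M})$ is a morphism of simplicial modules, hence commutes with the boundary $b=\sum_{i=0}^{n}d_{i}$, and is evidently bijective in each degree; therefore it induces the claimed isomorphism $H_{*}(G,M)\cong H_{*}(\mathbb{K}G,\widetilde{M})$. The only real obstacle is the bimodule verification in the first step, where the hypothesis $\b(m\cdot g)=\b(m)\cdot \a(g)$ is used decisively; everything else is a straightforward identification, paralleling the cohomological theorem exactly.
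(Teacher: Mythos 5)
Your proposal is correct and follows essentially the same route as the paper: verify the bimodule axioms for $\widetilde{M}$ using the hypothesis $\b(m\cdot g)=\b(m)\cdot\a(g)$, observe that the face maps of the two complexes coincide (the last Hochschild face $a_n\cdot v$ collapsing to $\b(v)$ under the trivial left action), and conclude via the identity map on chains. Your write-up is in fact more complete than the paper's, which records only the bimodule compatibility computation and asserts the agreement of the differentials without spelling out the $d_n$ comparison.
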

\begin{proof}
The condition $\b(m\cdot g)=  \b(m) \cdot  \a(g) $ insures that $\widetilde{M}$ with the given  right action and the  trivial left  action $g \cdot m:= m\cdot 1 = \b(m)$ is a  $G$-bimodule, and therefore a  $\mathbb{K}G$-bimodule, because

$$\a(g)\cdot (m\cdot k)= \b (m\cdot k)= \b(m) \cdot\a(k)=(g\cdot m)\cdot \a(k).$$
Therefore  all differentials $\d_i$ of Hochschild homology of $\mathbb{K}G$ will be the same as the ones, $d_i$, for group homology of $G$.
Therefore the identity map $\Id: C_n(G, M)\longrightarrow C_n(\mathbb{K}G, \widetilde{M})$ induces an isomorphism on the level of complexes.
\end{proof}

The  conditions of $M$ in the previous theorem were also appeared in other context such as \cite{cg} where the authors used the category of Hom-modules over
Hom-algebras to obtain a monoidal category for modules over Hom-bialgebras.

  The  functoriality  of Hom-group homology is shown as follows.
\begin{proposition}
 Let $(G, \a_{G})$ and $(G', \a'_{G'})$ be two Hom-groups. The morphism $f: G\longrightarrow G'$ of Hom-groups induces the map
\begin{equation*}
\widehat{f}: H^{Hom}_n(G, \mathbb{K}G)\longrightarrow H^{Hom}_n(G', \mathbb{K}G')
\end{equation*}
 given by $$g_0, \cdots , g_n\mapsto f(g_0), \cdots, f(g_n).$$ 
\end{proposition}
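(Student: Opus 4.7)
The plan is to construct a chain-level map $F: C_\ast^{Hom}(G, \mathbb{K}G) \to C_\ast^{Hom}(G', \mathbb{K}G')$ from the Hom-group morphism $f$, verify that it commutes with the face maps from the preceding homology theorem, and then pass to homology. First I would extend $f$ by $\mathbb{K}$-linearity to a Hom-algebra morphism $f: \mathbb{K}G \to \mathbb{K}G'$; the identities $f(gh) = f(g)f(h)$ and $\alpha_{G'}\circ f = f\circ \alpha_G$ hold on the basis $G$ by assumption and therefore extend to all of $\mathbb{K}G$. I would then set
\[
F(m, g_1, \ldots, g_n) := (f(m), f(g_1), \ldots, f(g_n)), \qquad m \in \mathbb{K}G,\; g_i \in G,
\]
extended linearly in the first slot. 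Note that taking $M = \mathbb{K}G$ with its canonical bimodule structure satisfies the hypothesis of the homology theorem, since $\beta = \alpha$ is an algebra map, so $\beta(m \cdot g) = \beta(m)\cdot\alpha(g)$ reduces to multiplicativity of $\alpha$.

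Next I would verify $F d_i = d_i F$ for each face map in the homology chain complex (where now $\beta = \alpha_G$ and the right action $m \cdot g$ is just multiplication in $\mathbb{K}G$). For $d_0$,
\[
F d_0(m, g_1, \ldots, g_n) = (f(m g_1), \alpha_{G'}f(g_2), \ldots, \alpha_{G'}f(g_n)) = (f(m) f(g_1), \alpha_{G'}f(g_2), \ldots) = d_0 F(m, g_1, \ldots, g_n),
\]
using multiplicativity of $f$ on $\mathbb{K}G$ and the intertwining $f \circ \alpha_G = \alpha_{G'} \circ f$. For $1 \le i \le n-1$ the computation is identical: both sides produce $(\alpha_{G'}f(m), \alpha_{G'}f(g_1), \ldots, f(g_i)f(g_{i+1}), \ldots, \alpha_{G'}f(g_n))$. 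The case $i = n$ follows directly from $f\circ\alpha_G = \alpha_{G'}\circ f$. Hence $F$ commutes with $b = \sum_i d_i$, so $F$ is a chain map and descends to the desired $\widehat{f}: H_n^{Hom}(G, \mathbb{K}G) \to H_n^{Hom}(G', \mathbb{K}G')$.

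There is no substantive obstacle: the argument is pure functoriality, relying only on the two axioms built into the definition of a morphism of Hom-groups. The one point worth highlighting, in contrast with the cohomology functoriality proposition shown earlier, is that here the coefficient module transforms \emph{covariantly} along with $f$ (because $\mathbb{K}G$ itself depends on $G$), so $F$ sends both the coordinate $m \in \mathbb{K}G$ and the tuple $(g_1, \ldots, g_n)$ through the same extended map $f$; this is exactly why no choice of pullback structure or auxiliary condition (as in Theorem~\ref{homology-relations}) is required.
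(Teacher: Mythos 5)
Your proposal is correct and follows essentially the same route as the paper: the paper's proof is a one-line observation that $\widehat{f}$ commutes with all face maps because $f(\alpha_G(g))=\alpha_{G'}(f(g))$ and $f(gk)=f(g)f(k)$, which is exactly the computation you carry out explicitly. Your additional check that $\mathbb{K}G$ with $\beta=\alpha$ satisfies the hypothesis of the homology theorem is a worthwhile detail the paper leaves implicit, but it does not change the argument.
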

\begin{proof}
  The map $\widehat{f}$ commutes with all faces $\delta_i$ because $f(\a(g))=\a'(f(g))$ and $ f(gk)=f(g) f(k)$.
\end{proof}

\end{document}